\documentclass{AIMS}
\usepackage{mathpazo}
                         \usepackage{amsthm}

\usepackage[disable,colorinlistoftodos,bordercolor=orange,backgroundcolor=orange!20,linecolor=orange,textsize=scriptsize]{todonotes}

\usepackage{graphics} %
\usepackage{amsmath} %
\usepackage{amssymb}  %
\usepackage{mathtools}
\usepackage{csquotes}
\renewcommand{\geq}{\geqslant}
\renewcommand{\leq}{\leqslant}

\usepackage{pgfplots, pgfplotstable}
\usepgfplotslibrary{statistics}

\usepackage{hyperref}
\usepackage{cleveref}

\crefname{table}{Table}{Tables}
\Crefname{table}{Table}{Tables}

\newif\iftrop
\troptrue

\newif\ifintro
\introtrue

\newtheorem{theorem}{Theorem}
\newtheorem{proposition}[theorem]{Proposition}
\newtheorem{corollary}[theorem]{Corollary}

\theoremstyle{remark}
\newtheorem{remark}[theorem]{Remark}
\theoremstyle{definition}
\newtheorem{definition}[theorem]{Definition}

\newcommand{\A}{\mathcal{A}}

\newcommand{\F}{\mathcal{F}}

\newcommand{\C}{\mathcal{C}}

\newcommand{\p}{[ p ]}

\newcommand{\R}{\mathbb{R}}

\DeclareMathOperator{\interior}{int}

\newcommand{\hT}{\widehat{T}}

\newcommand{\rnp}{\R_{+}^n}

\newcommand{\norm}[1]{\|#1\|}
\newcommand{\scal}[2]{\langle {#1} , {#2} \rangle}

\title[A convergent hierarchy to compute the joint spectral radius]{
A convergent hierarchy of non-linear eigenproblems to compute the joint spectral radius of nonnegative matrices}

\author{St\'ephane Gaubert and Nikolas Stott}
\thanks{INRIA and CMAP, {\'E}cole Polytechnique, CNRS, 91128 Palaiseau Cedex, France. \\  e-mail: {\tt\footnotesize stephane.gaubert@inria.fr}, {\tt\footnotesize nikolas.stott@polytechnique.edu}}
\thanks{St\'ephane Gaubert and Nikolas Stott were partially supported by the ANR projects MALTHY (ANR-13-INSE-0003) and DEMOCRITE (ANR-13-SECU-0007), by the PGMO program of EDF and Fondation Math\'ematique Jacques Hadamard, and by
the ``Investissement d'avenir'', r{\'e}f{\'e}rence ANR-11-LABX-0056-LMH.
S. Gaubert also gratefully acknowledges the support of the Mittag-Leffler institute. 
}%

\keywords{
Joint spectral radius, Nonlinear eigenproblem, nonnegative matrices, Perron-Frobenius method, iterative method, Krasnoselskii-Mann iteration, risk sensitive control, entropy games.}

\subjclass{%
AMS primary 47H05; %
secondary: 93C30 %
49L20 %
}

\begin{document}

\maketitle
\thispagestyle{empty}

\begin{quotation}
To Fr\'ed\'eric Bonnans, at the occasion of his 60th birthday.

\end{quotation}
\begin{abstract}
We show that the joint spectral radius of a finite collection of
nonnegative matrices can be bounded by the eigenvalue of a
non-linear operator. This eigenvalue coincides with the
ergodic constant of a risk-sensitive control problem, or of an entropy
game, in which the state space consists of all switching
sequences of a given length. We show that, by increasing this length,
we arrive at a convergent approximation scheme to compute the joint spectral radius.
 The complexity of this method is exponential in the length of the
   switching sequences, but it is quite insensitive to the size
of the matrices, allowing us to solve very large scale instances
(several matrices in dimensions of order 1000 within a minute).
 An idea of this method is to replace a hierarchy of
 optimization problems,
introduced by Ahmadi, Jungers, Parrilo and Roozbehani,
by a hierarchy of nonlinear eigenproblems. To solve
the latter eigenproblems, we introduce
a projective version of Krasnoselskii-Mann iteration.
This method is of independent interest as it applies
more generally to the nonlinear eigenproblem for a monotone positively
homogeneous map. Here, this method allows for scalability by 
avoiding the recourse to
 linear or semidefinite programming
 techniques.
\end{abstract}

\section{Introduction}
\subsection{Motivation}
A fundamental issue, in optimal control, is to develop 
efficient numerical schemes that provide globally optimal
solutions. Dynamic programming
does provide a guaranteed global optimum but it 
is subject to the well known curse of dimensionality.
Indeed, the main numerical methods,
including monotone finite difference
or semi-Lagrangean schemes~\cite{crandall-lions,capuzzodolcetta,falcone-ferretti,carlini-falcone-ferretti},
and the anti-diffusive schemes~\cite{zidani-bokanowski},
are grid-based.
It follows that the time needed to obtain an approximate solution with a given accuracy is exponential in the dimension of the state space.

Recently, some innovative methods have been introduced in optimal control,
which somehow attenuate the curse of dimensionality, for structured
classes of problems. 

McEneaney considered in~\cite{mceneaney07}
hybrid optimal control problems in which a discrete
control allows one to switch between different linear quadratic
models. The max-plus type method that he introduced
approximates the value function by a supremum
of quadratic forms. Its complexity, which is
exponential in some parameters, has the remarkable
feature of being polynomial in the dimension~\cite{mccomplex,qusico}.
To produce approximations of the value function as concise as possible,
the method makes an intensive use of semidefinite programming~\cite{qucdc}.

A different problem consists in 
computing the joint spectral radius of a finite set of matrices~\cite{Jun09}.
This boils down to computing an ergodic value function,
known as the Barabanov norm. Specific numerical
methods have been developed, which approximate the Barabanov ball by a polytope~\cite{Guglielmi2014}, or are of semi-Lagrangean type~\cite{Kozyakin10}.
Ahmadi, Jungers, Parrilo and Roozbehani~\cite{pathcomplete} developed
a new method, based on a path complete automaton.
It approximates the Barabanov norm by a supremum of quadratic norms. 
Whereas the worst case complexity
estimates in~\cite{pathcomplete} are still subject to a curse of dimensionality,  in practice, the efficiency of the method is determined
by the complexity of the optimal switching law rather than 
by the dimension itself.
This allows one to solve instances of dimension inaccessible by a grid-based method.

In the max-plus method of McEneaney, and in the method of Ahmadi et al.,
solving large scale semidefinite programs appears to be the bottleneck,
limiting the applicability range.

In our recent work~\cite{cdc17,stottphd}, we introduced a new method to approximate
the joint spectral radius. We replaced the solution of large scale SDP problems
by the solution of eigenproblems involving non-linear operators,
the ``tropical Kraus maps''. The latter are
the analogues of completely positive maps,
or of ``quantum channels'' acting
on the space of positive semidefinite matrices, the operation
of addition being now replaced by a multivalued supremum operation in
the L\"owner order. To solve these eigenproblems, we used
iterative power type schemes, allowing us to deal with large scale instances (the algorithm of~\cite{cdc17,stottphd} could handle
several matrices of order 500 in a few minutes). 
The convergence of these iterative schemes, however, is only
guaranteed so far under restrictive assumptions, since the
``tropical Kraus maps'' are typically nonmonotone and expansive
in the natural metrics.

\subsection{Contribution}
In this paper, we develop a non-linear fixed point approach
to approximate the joint spectral radius in the special
case of {\em nonnegative matrices}.
We exploit a result of Guglielmi and Protasov~\cite{gugl_mono}, showing that for nonnegative matrices, it suffices to look for a {\em monotone norm}. 
We show here that such a monotone norm can be approximated by a finite supremum of linear forms, which are found as the solution of a non-linear eigenproblem.
This is in contrast
to earlier polyhedral approximation schemes,
relying for instance on linear programming. 

More precisely, we 
introduce a hierarchy of linear eigenproblems, parametrized
by a certain ``depth'', inspired by~\cite{pathcomplete,cdc17}, and we show that,
as the depth tends to infinity, the non-linear eigenvalue does converge
to the joint spectral radius. We remark that the initial (``depth 0'')
bound in our hierarchy coincides with the bound
of the joint spectral radius introduced by Blondel and Nesterov~\cite{blondel}.

The non-linear operator arising in our construction actually belongs to a 
known class: it can be identified to the dynamic programming operator
of an ergodic risk sensitive control problems~\cite{anantharam},
or of a (one player) ``entropy game''~\cite{asarin,akian_et_al:LIPIcs:2017:7026}. This operator enjoys remarkable properties, like log-convexity,
monotonicity, nonexpansiveness with respect to Thompson's part
metric or Hilbert's projective metric. As a result, 
computing the non-linear eigenvalue is a tractable problem.
It is shown to be polynomial time in~\cite{akian_et_al:LIPIcs:2017:7026}.
Moreover, large scale instances can be solved by power-type schemes. In particular, we introduce a projective version of the Krasnoselskii-Mann iteration.
We present this numerical scheme in a more general setting, for a monotone
positively homogeneous maps on the standard orthant. The convergence
of this scheme is obtained as a corollary of the convergence
of the original scheme. This projective scheme for nonlinear eigenproblems
may be of wider interest and applicability:
it has universal convergence properties and explicit bounds independent of the dimension, unlike the nonlinear power algorithm
considered classically, see e.g.~\cite{nussbaum88,agn12,FGH09}. 
It also has a geometric convergence property,
under less restrictive assumptions.

We report numerical results on large scale instances, up to dimension 5000,
obtained by an OCaml implementation of the present algorithm.

A comparison with our companion work~\cite{cdc17} 
may help to appreciate the present approach:
it appears to be a ``dequantization''
of the non-linear fixed point approach of~\cite{cdc17}. By ``dequantization'',
we mean that we use here operators acting on the standard orthant, in contrast,
the operator in~\cite{cdc17} acts on the cone of positive
semidefinite matrices. Whereas the approach of~\cite{cdc17} is
more general, leading to a convergent approximation
scheme for any family of matrices, the present algorithm
only applies to families of {\em nonnegative matrices}. However,
it is experimentally 
faster, and it has stronger theoretical convergence guarantees. This
suggests that the joint spectral radius problem is easier
for nonnegative matrices.

\subsection{Organization of the paper}
In \Cref{sec-jsp}, we recall some basic results on Barabanov norms of nonnegative matrices. In~\Cref{sec-hierarchy}, we introduce the family of non-linear eigenproblems to approximate the joint spectral radius. We show that these eigenproblems are solvable, under an appropriate irreducibility condition. In \Cref{sec-convergence},  we show that the non-linear eigenvalues in this hierarchy do converge
to the joint spectral radius. The projective Krasnoselskii-Mann
iterative scheme is analysed in~\Cref{sec-solving}. 
Benchmarks are presented
in~\Cref{sec-benchmarks}.

\section{The joint spectral radius of nonnegative matrices}
\label{sec-jsp}

The joint spectral radius $\rho(\A)$ of a finite collection of $n \times n$ real matrices $\A = \{ A_1, \dots, A_p\}$ is defined by
\begin{align*}
\rho(\A) \coloneqq \lim_{k \to \infty} \max_{1 \leq i_1, \dots, i_k \leq p} \norm{ A_{i_1} \cdots A_{i_k} }^{1/k} \,.
\end{align*}

When the set of matrices $\A$ is irreducible (meaning that there is no nontrivial subspace of $\R^n$ that is left invariant by all matrices), 
a fundamental result by Barabanov~\cite{barab} shows that there is a norm $\nu$ on $\R^n$ such that
\begin{align}
\label{eq:invariantnorm}
\max_{1 \leq i \leq p} \nu(A_i x) = \lambda \nu(x) \;,\, \forall x \in \R^n \,,
\end{align}
for some positive real number $\lambda$. The scalar $\lambda$ is unique and coincides with the joint spectral radius $\rho(\A)$.

A norm that satisfies~\Cref{eq:invariantnorm} is called an \emph{invariant norm}. A norm that only satisfies the inequality 
\begin{align}
\label{e-lambdextrema}
\max_{1 \leq i \leq p} \nu(A_i x) \leq \lambda \nu(x)
\end{align}
for all vectors $x \in \R^n$ is called a $\lambda$-\emph{extremal norm}. In that case, it is readily seen that $\lambda \geq \rho(\A)$, so that $\lambda$-extremal norms provide safe upper bounds of the joint spectral radius.

We now assume that the matrices in $\A$ are nonnegative, i.e.~their entries take nonnegative values. It is then readily seen that all matrices in $\A$ leave the (closed) cone of nonnegative vectors invariant. The latter cone, denoted by $\rnp$,  induces an ordering on $\R^n$: we have $x \leq y$ if and only if $y - x$ is nonnegative. 
We note that a vector belongs to the interior of $\rnp$ if its entries are positive.
Recall that the cone $\rnp$ is self-dual, so that $x \leq y$ if and only if $\scal{u}{y-x} \geq 0$ for all $u \in \rnp$. 
This cone also induces a lattice structure on $\R^n$, meaning that the supremum of two vectors $x,y$ always exists and is given coordinate-wise by $\big[ \sup(x,y) \big]_i = \sup(x_i , y_i)$.
A norm defined on $\R^n$ is called {\em monotone} if $0\leq x \leq y$ 
implies $\nu(x) \leq \nu(y)$.
The (extreme) faces of $\rnp$ are the sets $\{ x \in \rnp \colon x_i = 0 \;\text{if}\; i\notin I \}$ for $I \subseteq \{ 1, \dots,n\}$. The cases $I = \emptyset$ (corresponding to $F = \{0\}$) and $I = \{ 1, \dots,n\}$ (giving $F = \rnp$) 
yield the {\em trivial} faces.
When the matrices in $\A$ are nonnegative, the irreducibility assumption  on $\A$ can be weakened to \emph{positive-irreducibility}, meaning that there is no non-trivial face of the cone of nonnegative vectors that is left invariant by all matrices in $\A$.
A theorem by Guglielmi and Protasov~\cite{gugl_mono} shows that, in this setting, the norm in~\Cref{eq:invariantnorm} can be chosen to be monotone.
\begin{theorem}[Corollary 1 in~\cite{gugl_mono}]
\label{thm:exis_mono}
A positively-irreducible family of nonnegative matrices has a monotone invariant norm.
\end{theorem}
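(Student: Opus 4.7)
The plan is to run Barabanov's classical construction starting from a monotone base norm, exploiting the fact that nonnegative matrices are order-preserving on $\rnp$ so that monotonicity propagates through every stage. Set $\lambda := \rho(\A)$, fix a monotone base norm $\nu_0$ on $\R^n$ (for instance $\nu_0(x) = \max_j |x_j|$), and define the Barabanov operator on positively homogeneous functions by $(B\mu)(x) := \lambda^{-1}\max_{1\leq i \leq p} \mu(A_i x)$. An invariant norm is precisely a fixed point of $B$; I would extract one as a limit of the iterates $\nu_k := B^k \nu_0$.

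The first step is the verification that $B$ preserves monotonicity. If $\mu$ is monotone and $0 \leq x \leq y$, then $0 \leq A_i x \leq A_i y$ for every $i$ by nonnegativity of $A_i$, hence $\mu(A_ix) \leq \mu(A_iy)$, and so $(B\mu)(x) \leq (B\mu)(y)$. Triangle inequality and positive homogeneity are similarly preserved, so every $\nu_k$ is a monotone seminorm. The limit $\nu$ extracted from the sequence $(\nu_k)$, via a standard compactness argument on the unit sphere (or equivalently a limsup/Ces\`aro construction), therefore inherits monotonicity, and satisfies the Barabanov identity $\max_i \nu(A_i x) = \lambda\nu(x)$ by a routine shift argument on the iteration $\nu_{k+1} = B\nu_k$.

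The main obstacle is to show that the extracted limit is a genuine norm---finite and strictly positive on all nonzero vectors---under the weaker positive-irreducibility, rather than under the full linear irreducibility that would suffice for Barabanov's theorem in its classical form. I would address this by working first on $\rnp$, where positive-irreducibility is the natural irreducibility condition. The monotone, positively $1$-homogeneous map $T(x) := \lambda^{-1}\max_i A_ix$ is nonexpansive in Hilbert's projective metric, and nonlinear Perron--Frobenius theory for such maps (in the style of Nussbaum) guarantees an eigenvector $\bar x$ in the interior of $\rnp$ satisfying $T(\bar x) = \bar x$. The existence of $\bar x$ yields uniform two-sided estimates of the form $c_x \bar x \leq A_{i_k}\cdots A_{i_1} x / \lambda^k \leq C_x \bar x$ for $x$ in the interior of $\rnp$, and these suffice to make the limiting $\nu$ finite and strictly positive on $\rnp \setminus \{0\}$. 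Extension from $\rnp$ to $\R^n$ is then performed by a monotone procedure, for instance $\nu(x) := \nu(|x|)$, where $|x|$ denotes the componentwise absolute value. The most delicate remaining point---the one I expect to require the most care---is the verification that the Barabanov identity continues to hold for signed vectors: this ultimately rests on the observation that, because the matrices are nonnegative, the worst-case switching sequence achieving $\max_i\nu(A_i x)$ can always be realized on a nonnegative iterate, so that the analysis on $\rnp$ transfers to the extension.
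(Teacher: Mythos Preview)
The paper does not supply its own proof of this statement: \Cref{thm:exis_mono} is quoted verbatim as Corollary~1 of Guglielmi and Protasov~\cite{gugl_mono} and used as a black box. So there is no argument in the present paper to compare your proposal against.

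Assessing your sketch on its own merits, the overall strategy---run the Barabanov iteration from a monotone seed, observe that nonnegative matrices propagate monotonicity through the operator $B$, and appeal to nonlinear Perron--Frobenius theory on $\rnp$ to handle the positive-irreducibility hypothesis---is indeed the natural route and is close in spirit to what Guglielmi and Protasov do. Two points, however, are not yet under control. First, the two-sided estimate $c_x \bar x \leq A_{i_k}\cdots A_{i_1} x / \lambda^k \leq C_x \bar x$ cannot hold for \emph{all} switching sequences: take $\A=\{I,A\}$ with $\rho(A)>1$, so that $\lambda>1$, and choose the constant sequence $i_j\equiv 1$ to see the lower bound collapse to $0$. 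What you actually need for positivity of the limiting functional is a lower bound on the \emph{maximum} over sequences, and that requires a separate argument (typically via the eigenvector $\bar x$ of the componentwise-max map and an extremal trajectory construction). Second, the extension $\nu(x):=\nu(|x|)$ gives a monotone $\lambda$-extremal norm (since $|A_ix|\leq A_i|x|$), but the Barabanov \emph{equality} for signed $x$ does not follow from the heuristic you give; cancellation in $A_ix$ can make $\nu(|A_ix|)$ strictly smaller than $\nu(A_i|x|)$, and one must argue more carefully that the supremum over $i$ still attains $\lambda\nu(|x|)$. You correctly flag this as the delicate step, but the justification offered (``the worst-case switching sequence can always be realized on a nonnegative iterate'') is not a proof.
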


We shall say that a map $\nu$ from $\rnp$ to $\R$ is a {\em monotone hemi-norm}
if it is convex and positively homogeneous of degree $1$, 
if $0\leq x\leq y$ implies $\nu(x)\leq \nu(y)$, and if $\nu(x)=0$ with $x\geq 0$
implies $x=0$. The term hemi-norm is borrowed to~\cite{GV10}, functions
of this kind are also known as weak Minkowski norms in metric
geometry~\cite{papadopoulos}

Note that a monotone hemi-norm $\nu$ defined on $\rnp$ 
extends to a monotone norm on $\R^n$:
\begin{align}
\label{eq:mon_norm_ext}
\widehat{\nu} (x) \coloneqq \inf\{ \nu(y) \vee \nu(z) \colon x = y - z\,,\text{with}\, y, z \geq 0 \} \enspace .
\end{align}
We shall say that $\nu$ is a {\em monotone $\lambda$-extremal hemi-norm} on $\rnp$
if
\begin{align*}
\max_{1 \leq i \leq p} \nu(A_i x) \leq \lambda \nu(x) \,,\;\forall x \in \rnp \,.
\end{align*}
This implies that the associated monotone norm $\widehat{\nu}$ is a $\lambda$-extremal norm. In this way, it suffices to study monotone $\lambda$-extremal hemi-norms defined on $\rnp$.

\section{A hierarchy of non-linear eigenproblems}
\label{sec-hierarchy}

\subsection{Definition of the operators}

In the sequel, we consider a finite set of $n\times n$
nonnegative matrices $\A = \{ A_1, \dots, A_p \}$. We denote by $\p = \{ 1, \dots, p \}$.

The operator considered at the $0$-level of the hierarchy is given by
\begin{align*}
T^0(x) \coloneqq \sup_{1 \leq a \leq p} A_a^Tx \,.
\end{align*}
Higher levels of the hierarchy are built by introducing a memory process that keeps track of the past matrix products, up to a given depth. More precisely, given an integer $d$, the operator considered in the $d$-level 
is a self-map of the product cone $\prod_{s \in \p^d} \rnp$.
It maps the vector $x = (x_s)_{s \in \p^d}$, where each $x_s\in \rnp$,
to the vector $T^d(x)$, whose $s$-component is the vector of $\rnp$ given by
\begin{align}
T^d_s(x) \coloneqq \sup_{r,a\colon \tau^d(r,a) = s} A_a^Tx_r \,.
\label{e-def-Td}
\end{align}
Here, the map $\tau^d\colon \p^d \times \p \to \p^d$ is the transition map of  the \emph{De Bruijn automaton} of length $d$ on $p$ symbols: given a word $i_1\cdots i_d\in \p^d$, we have
\begin{align*}
\tau^d( i_1\cdots i_d, a) = i_2\cdots i_d a \,.
\end{align*}
In other words, the transition forgets the initial symbol of a sequence,
and concatenates the letter $a$ representing the most recent switch, to this
sequence.

The map $T^d$ is monotone with respect to the cone $\prod_{s \in \p^d} \rnp$, 
i.e., 
\begin{align*}x\leq y\implies T^d(x)\leq T^d(y)\enspace, 
\end{align*}
for all $x,y\in \prod_{s \in \p^d} \rnp$, 
and it is (positively) homogeneous (of degree one), meaning that
\begin{align*}
T^d(\lambda x) = \lambda T^d(x)
\end{align*}
holds for all positive $\lambda$.

\subsection{Some results of non-linear Perron-Frobenius theory}
Monotone and homogeneous maps are studied
in non-linear Perron-Frobenius theory. We recall some basic
results here, referring the reader to~\cite{nussbaum88,nussbaumlemmens} for background.

The {\em spectral radius} of a monotone and homogeneous map $f$ defined on a cone $\C$, denoted by $r(f)$ is defined by:
\begin{align*}
r(f) \coloneqq \lim_{k \rightarrow +\infty} \norm{f^k(x)}^{1/k}
\end{align*}
for $x \in \interior \C$. This value is independent of the choice of $x$ and the norm $\norm{ \cdot}$ if the cone $\C$ is included in a finite
dimensional space, see~\cite{Nuss-Mallet,agn,nussbaumlemmens}.

We say that a monotone and homogeneous map $f\colon \rnp \to \rnp$ is
{\em positively irreducible} if it does not leave invariant a non-trivial face of $\rnp$. A basic result of non-linear Perron-Frobenius
theory, which follows as a consequence of Brouwer theorem, shows that
a positively irreducible map has an eigenvector in the interior
of the cone. Then, the associated eigenvalue $\lambda$ coincides with the spectral radius $r(f)$.
The same conclusion holds, in fact, under less demanding assumptions~\cite{gg04}, however, for the present class of operators, positive irreducibility will suffice.

Nussbaum proved in~\cite{nussbaum86} that the classical variational characterization of the Perron root of a nonnegative matrix carries over to the non-linear setting. The next theorem follows by combining results of \cite{nussbaum86}, ~\cite{gg04}
and~\cite{AGGut10}.

\begin{theorem}[Non-linear Collatz-Wielandt formul\ae~{\cite[Theorem~3.1]{nussbaum86}, \cite[Prop.~1]{gg04}, \cite[Lemma~2.8]{AGGut10}}]
\label{thm:cw}
Given a continuous, monotone and homogeneous map $f$ on the cone $\rnp$, we have
\begin{align*}
r(f)
&= \inf \big\{ \rho > 0 \colon \exists u \in \interior \rnp, \qquad f(u) \leq \rho u \big\} \,\\
&= \max \big\{\rho \geq 0 \colon \exists v \in \rnp\setminus\{0\}, \qquad f(v) \geq \rho v\} \enspace .
\end{align*}
\end{theorem}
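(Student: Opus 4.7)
The plan is to show the two equalities by sandwiching the spectral radius between $\underline{\rho}$ (the supremum on the right) and $\bar{\rho}$ (the infimum on the left), obtaining the easy bounds $\underline{\rho}\le r(f)\le \bar{\rho}$ from direct iteration, then closing both gaps by an irreducible perturbation whose interior eigenvector converges to an eigenvector of $f$.

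First I would establish $r(f)\le \bar{\rho}$: if $f(u)\le \rho u$ for some $u\in \interior\rnp$, then by induction, using monotonicity and homogeneity, $f^k(u)\le \rho^k u$, so $\norm{f^k(u)}^{1/k}\le \rho\,\norm{u}^{1/k}\to \rho$ and $r(f)\le\rho$; taking the infimum gives $r(f)\le\bar{\rho}$. For $\underline{\rho}\le r(f)$: if $f(v)\ge \rho v$ with $v\in \rnp\setminus\{0\}$, fix any $u\in \interior\rnp$ and choose $\alpha>0$ with $\alpha u\ge v$ (possible since $u$ has positive entries); monotonicity and homogeneity give $\alpha f^k(u)=f^k(\alpha u)\ge f^k(v)\ge \rho^k v$, so $\norm{f^k(u)}\ge \rho^k\norm{v}/\alpha$ and thus $r(f)\ge \rho$.

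For the reverse inequalities I would introduce the perturbation $f_\epsilon(x):=f(x)+\epsilon\bigl(\sum_i x_i\bigr)\mathbf{1}$, where $\mathbf{1}$ denotes the all-ones vector. This map is continuous, monotone, positively homogeneous, and satisfies $f_\epsilon(\rnp\setminus\{0\})\subset \interior\rnp$, so it leaves no nontrivial face of $\rnp$ invariant and is therefore positively irreducible. By the Brouwer-based existence result mentioned just above the theorem, $f_\epsilon$ admits an eigenvector $u_\epsilon\in \interior\rnp$ with eigenvalue $\lambda_\epsilon=r(f_\epsilon)$. Normalizing $u_\epsilon$ on the standard simplex, the compactness of the simplex lets me extract a subsequence $u_{\epsilon_k}\to\bar v$ with $\bar v\ne 0$ in $\rnp$. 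Since $f_\epsilon\to f$ uniformly on bounded sets as $\epsilon\to 0$ and $f$ is continuous, passing to the limit in $f_{\epsilon_k}(u_{\epsilon_k})=\lambda_{\epsilon_k}u_{\epsilon_k}$ yields $f(\bar v)=L\bar v$, where $L:=\lim_k \lambda_{\epsilon_k}$.

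It remains to identify $L$ with $r(f)$. Since $f\le f_\epsilon$ pointwise, monotonicity of the spectral radius gives $r(f)\le \lambda_\epsilon$, hence $L\ge r(f)$; conversely $\bar v$ witnesses $L\le \underline{\rho}\le r(f)$ by the already-proven direction. Therefore $L=r(f)$, the whole family $\lambda_\epsilon$ converges to $r(f)$, and $\bar v$ is an eigenvector of $f$ with eigenvalue $r(f)$. The inequality $f(u_\epsilon)\le f_\epsilon(u_\epsilon)=\lambda_\epsilon u_\epsilon$ then shows $\bar{\rho}\le \lambda_\epsilon\to r(f)$, and $\bar v$ itself realizes the maximum in the definition of $\underline{\rho}$, proving $\underline{\rho}\ge r(f)$ and the attainment of the max. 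The main obstacle is the limit step: one must be sure that the normalized eigenvectors do not escape to the boundary in a degenerate way and that the uniform convergence $f_\epsilon\to f$ lets the eigenvalue equation survive the limit, but both points are secured by working on the compact simplex and using the linear-in-$\epsilon$ form of the perturbation.
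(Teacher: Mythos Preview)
The paper does not prove this theorem; it is quoted from the literature (Nussbaum~\cite{nussbaum86}, Gaubert--Gunawardena~\cite{gg04}, Akian--Gaubert--Guterman~\cite{AGGut10}), so there is no in-paper argument to compare against. Your proof is a correct, self-contained version of the classical perturbation route: the easy bounds $\underline{\rho}\le r(f)\le\bar{\rho}$ by iteration, then an irreducible perturbation $f_\epsilon(x)=f(x)+\epsilon\langle e,x\rangle e$ to manufacture an interior eigenvector whose limit $\bar v$ certifies both reverse inequalities and the attainment of the maximum.

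Two small points should be made explicit to tighten the write-up. First, you need $\lambda_\epsilon$ bounded in order to speak of $L=\lim_k\lambda_{\epsilon_k}$; this is easy (choose any $w\in\interior\rnp$, then $f_\epsilon(w)\le f(w)+\epsilon\|w\|_1 e\le Cw$ for a constant $C$ independent of $\epsilon\in(0,1]$, so $\lambda_\epsilon\le C$), and in fact $\lambda_\epsilon$ is monotone in $\epsilon$, so the full limit exists. Second, the identification $\lambda_\epsilon=r(f_\epsilon)$ for the interior eigenvector does not rely on the theorem you are proving: it follows from your already-established easy directions applied to $f_\epsilon$ (an interior eigenvector witnesses both $\bar\rho(f_\epsilon)\le\lambda_\epsilon$ and $\underline\rho(f_\epsilon)\ge\lambda_\epsilon$), so there is no circularity. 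With these two remarks added, the argument is complete.
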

We write ``inf'', as the infimum is not attained in general, whereas the maximum is always attained. 

In particular, if the map $f$ is not positively-irreducible, 
it may be the case that $f(u) = \lambda u$ holds for some nonzero
vector $u$ in the boundary of the cone $\rnp$. Then, we can only
conclude from the second of the Collatz-Wielandt formul\ae\ 
that $\lambda \leq r(f)$. However, by the first formula, we do have
$r(f)=\lambda$ if $u$ belongs to the interior of $\rnp$.

\subsection{Construction of the hierarchy}

For every integer $d \geq 0$, the $d$-level of the hierarchy consists in solving the non-linear eigenproblem:
\begin{align}
\label{hier}
\tag{$E_d$}
\begin{cases}
T^d(u) = \lambda_d u \\
u \in \prod_{s\in \p^d} \rnp\,, u\neq 0
\end{cases}
\end{align}

The first main result shows that every problem~\eqref{hier} has a solution, and that a solution provides an upper bound on the joint spectral radius $\rho(\A)$ and a corresponding monotone $\lambda_d$-extremal hemi-norm.

\begin{theorem}
\label{thm:over}
Suppose that the set of nonnegative matrices $\A$ is positively-irreducible.
Then Problem~\eqref{hier} has a solution. Any such solution $(\lambda_d, u)$ satisfies
\begin{align*}
 \rho(\A) \leq \lambda_d \leq r(T^d) \,.
\end{align*}
Moreover, the map $\norm{ x }_u \coloneqq \max_s \scal{u_s}{x} $ is a monotone $\lambda_d$-extremal hemi-norm:
\begin{align*}
\max_a \norm{A_a x}_u \leq \lambda_d \norm{x}_u \,,\forall x\in \rnp \,.
\end{align*}
\end{theorem}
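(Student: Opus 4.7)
The plan is to produce the eigenvector by a Brouwer argument on the unit simplex of the product cone, to read the extremal inequality directly off the fixed-point equation, and to invoke positive-irreducibility of $\A$ twice: once to ensure $T^d$ has no zeros on $C\setminus\{0\}$, and once to show that the candidate hemi-norm is positive-definite.

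For existence, I would set $C=\prod_{s\in\p^d}\rnp$ and $\Delta = \{ x\in C : \sum_{s,i}(x_s)_i = 1 \}$. The key preliminary step is the claim that positive-irreducibility of $\A$ forces $T^d(x)\neq 0$ for every nonzero $x\in C$. Indeed, if $T^d(x) = 0$ then, since each component $T^d_s(x)$ is a supremum of nonnegative vectors, every $A_a^Tx_r$ vanishes; setting $J := \{j : (x_r)_j > 0 \text{ for some } r\}$, this forces every row of every $A_a$ indexed by $j\in J$ to be zero. Since $J\neq\emptyset$, this either makes all matrices in $\A$ zero (when $J=[n]$) or produces the non-trivial invariant face $\{y\in\rnp : y_j = 0,\, j\in J\}$, both incompatible with positive-irreducibility. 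Consequently $x\mapsto T^d(x)/\|T^d(x)\|_1$ is a continuous self-map of the compact convex set $\Delta$, and Brouwer's theorem yields $u\in\Delta$ with $T^d(u) = \lambda_d u$ and $\lambda_d = \|T^d(u)\|_1 > 0$. The bound $\lambda_d \leq r(T^d)$ is then the second Collatz--Wielandt formula of~\Cref{thm:cw} applied to $u\neq 0$.

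For the extremal inequality, the eigenequation $\sup_{r,a:\tau^d(r,a)=s'}A_a^Tu_r = \lambda_d u_{s'}$ implies coordinate-wise $A_a^Tu_s \leq \lambda_d u_{\tau^d(s,a)}$ for every $s,a$. Pairing with $x\in\rnp$ and taking the relevant maxima yields
\begin{align*}
\max_a \|A_a x\|_u = \max_{a,s}\scal{A_a^Tu_s}{x} \leq \lambda_d \max_{a,s}\scal{u_{\tau^d(s,a)}}{x} \leq \lambda_d\|x\|_u \,.
\end{align*}

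It remains to verify that $\|\cdot\|_u$ is a hemi-norm on $\rnp$; convexity, positive homogeneity and monotonicity are immediate from $u_s\geq 0$. For positive-definiteness I need $U=\sum_s u_s$ to have all entries positive. If $I=\{i:U_i=0\}$ were non-empty, then $(u_s)_i=0$ for every $s$ and $i\in I$, and $T^d_s(u)_i = \lambda_d(u_s)_i = 0$ forces $\sum_j(A_a)_{ji}(u_r)_j = 0$ for all $r,a$ and $i\in I$; an argument parallel to the existence step then produces the non-trivial invariant face $\{y\in\rnp : y_k = 0,\, k\notin I\}$ of $\A$, again contradicting positive-irreducibility. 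The extension~\eqref{eq:mon_norm_ext} then turns $\|\cdot\|_u$ into a monotone $\lambda_d$-extremal norm on $\R^n$, and the general observation~\eqref{e-lambdextrema} delivers $\rho(\A)\leq\lambda_d$. I expect the main technical point to be the common face-support argument used to go from positive-irreducibility of $\A$ to the two non-vanishing statements above; everything else is algebraic manipulation of the fixed-point equation.
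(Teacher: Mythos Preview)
Your proof is correct and follows essentially the same approach as the paper's: Brouwer on the simplex for existence, the eigen-inequality $A_a^T u_r \leq \lambda_d u_{\tau^d(r,a)}$ for the extremal property, positive-irreducibility of $\A$ to force $\sum_s u_s$ positive, and the second Collatz--Wielandt formula for $\lambda_d \leq r(T^d)$. You are actually more explicit than the paper in justifying that $T^d$ does not vanish on the simplex (the paper simply asserts the Brouwer step as standard), and your index-chasing face argument is the unpacked version of the paper's ``minimal face'' formulation $\F = \sum_s F_s$.
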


\begin{proof}
First, note that the map $T^d$, which is continuous and positively
homogeneous on the cone $\prod_{s \in \p^d} \rnp$, has an eigenvector $u$,
i.e., $T^d(u) = \lambda_d u$ for some $\lambda \geq 0$. This is indeed
 a standard result, which follows by applying Brouwer fixed point theorem
to the map $x\mapsto T^d(x)/\|T^d(x)\|_{1}$, where $\|\cdot\|$ denotes
the $\ell_1$ norm. This map sends continuously
the simplex $\Delta:= \{x\in \prod_{s \in \p^d} \rnp\mid \|x\|_1=1\}$ to itself.

Let us write $u= (u_s)_s$ this eigenvector, and for each $s$, let $F_s$ denote the minimal
face of $\R_+^n$ containing the vector $u_s$.
We introduce the set $\F \coloneqq \sum_{s \in \p^d} F_s$. The latter set is a face of $\rnp$ and satisfies $A_a \cdot \F \subseteq \F$ for all $a \in \p$ by definition of the map $T^d$, hence $\F = \rnp$ since the set of matrices $\A$ is irreducible. It follows that the vector $\sum_{s\in \p^d} u_s$ is positive and that the map $x \mapsto \max_s \scal{u_s}{x}$ is a monotone hemi-norm on $\rnp$.

We have $\scal{A_a^Tx}{u_r} \leq \scal{x}{T^d_{\tau(r,a)}(u)} \leq \lambda_d \scal{x}{u_{\tau(r,a)}}$ (we write $\tau$ instead of $\tau^d$).
 Taking the supremum over $r$ and $a$, we arrive at $\max_{a} \norm{A_ax}_u \leq \lambda_d \norm{x}_u$, hence $\rho(\A) \leq \lambda_d$.
We deduce from the second of the Collatz-Wielandt formul\ae\ in~\Cref{thm:cw},  that $\lambda_d \leq r(T^d)$.
\end{proof}

The positive-irreducibility of $T^d$ can be decided by checking whether a lifted version of the set of matrices $\A$ is positively irreducible. In the following, the set $\{ e_r \colon r \in \p^d\}$ denotes  the canonical basis of the space $\R^{p^d}$ and $\otimes$ is the Kronecker product.

\begin{proposition}
\label{prop:pos_irr}
The map $T^d$ is positively-irreducible if and only if the set of matrices $\{ (e_re_s^T) \otimes A_a \colon \tau^d(r,a) = s, \;r,s\in \p^d\,, a \in \p \}$ is positively irreducible.
\end{proposition}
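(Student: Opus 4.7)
The plan is to reduce the positive irreducibility of the nonlinear map $T^d$ to that of a family of linear pieces, and then invoke a complementation duality between a nonnegative linear map and its transpose.

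First, I would express $T^d$ as a coordinatewise supremum of nonnegative linear maps. For each $(r,a) \in \p^d \times \p$, setting $s \coloneqq \tau^d(r,a)$, define the linear map $L_{r,a}$ on $\prod_{s' \in \p^d} \rnp$ by $(L_{r,a}(x))_{s'} \coloneqq \delta_{s,s'} A_a^T x_r$; in Kronecker form, $L_{r,a} = (e_s e_r^T) \otimes A_a^T$. Definition~\eqref{e-def-Td} then reads $T^d(x) = \sup_{r,a} L_{r,a}(x)$ componentwise.

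Second, since $\prod_{s \in \p^d} \rnp$ is itself a standard orthant, its faces are the coordinate faces indexed by subsets $J \subseteq \p^d \times \{1,\dots,n\}$. As each $L_{r,a}$ takes nonnegative values, the componentwise supremum $T^d(x)$ has a zero coordinate if and only if each $L_{r,a}(x)$ does at that coordinate. Hence a face $F_J$ is $T^d$-invariant iff it is $L_{r,a}$-invariant for every $(r,a)$. This reduces positive irreducibility of $T^d$ to the absence of a common nontrivial invariant face for the linear family $\{L_{r,a}\}$.

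Finally, I would apply the classical complementation duality: a nonnegative matrix $M$ on $\R_+^N$ leaves the coordinate face $F_I = \{x \geq 0 : x_j = 0,\, j \notin I\}$ invariant iff $M_{jk} = 0$ for all $j \notin I$ and $k \in I$, which is the same condition as $(M^T)_{kj} = 0$ for $k \in I$ and $j \notin I$, i.e., $F_{I^c}$ being $M^T$-invariant. Since $I \mapsto I^c$ preserves the collection of nontrivial subsets, $\{L_{r,a}\}$ and $\{L_{r,a}^T\}$ have the same positive irreducibility status. A direct Kronecker computation gives $L_{r,a}^T = (e_r e_s^T) \otimes A_a$, which is precisely the matrix family in the statement. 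The only delicate point is the bookkeeping of the transposition inside the tensor product: the swap of the two factors of $e_s e_r^T$ and the exchange $A_a^T \leftrightarrow A_a$ occur simultaneously, while the passage $I \mapsto I^c$ on the index side is what realigns them with the form asserted in the proposition.
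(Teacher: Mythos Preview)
Your proposal is correct and follows essentially the same approach as the paper: both express $T^d$ as a coordinatewise supremum of the linear maps $(e_s e_r^T)\otimes A_a^T$, observe that a face is invariant under the supremum iff it is invariant under each linear piece, and then invoke the complementation duality $F_I \leftrightarrow F_{I^c}$ to pass from $\{L_{r,a}\}$ to $\{L_{r,a}^T\}=\{(e_r e_s^T)\otimes A_a\}$. The only organizational difference is that the paper first treats the case $d=0$ separately and then reduces the general case to it via the stacking identification, whereas you carry out the argument directly for arbitrary $d$ and make the complementation duality explicit; this is a presentation choice, not a different route.
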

\begin{proof}
First, we consider the case $d = 0$. Suppose the map $T^0$ is not positively irreducible, that is, there is a non-trivial face $F$ of the cone $\rnp$ that is invariant by $T^0$. 
Given any $x \in F$, we have $A_a^T x \leq T^0(x) \in F$, thus $F$ is also invariant by all matrices $A_a^T$, which is equivalent to saying that the set $\A$ is not positively irreducible.

In the general case, we can rewrite the map $T^d$, originally defined on the space $\prod_{s \in \p^d} \R^n$, on the space $\R^{p^d} \otimes \R^n$. To this end, we "stack" the (vector) components of the vector $(x_r)_{r \in \p^d}$ as one vector $l[x] \coloneqq \sum_{r} e_r \otimes x_r$.

Moreover, we obtain by "stacking" the components of $T^d(x)$ the vector
\begin{multline*}
l\Big[T^d(x)\Big]  = 
\sum_s  \; e_s  \otimes \big[T^d(x)\big]_s
 = \sup_s \;  e_s \otimes \Bigg[\sup_{r,a\colon \tau^d(r,a) = s}  \Big[ A_a^T x_r \Big] \Bigg] \\
= \sup_{r,s,a\colon \tau^d(r,a) = s} \Big[ e_s \Big] \otimes \Big[ A_a^T x_r \Big] 
= \sup_{r,s,a\colon \tau^d(r,a) = s} \sum_t \Big[ e_se_r^Te_t \Big] \otimes \Big[ A_a^T x_t \Big]  \\
 = \sup_{r,s,a \colon \tau^d(r,a) = s} \Big[ (e_re_s^T)^T \otimes A_a^T  \Big] \Big[ \sum_t e_t \otimes x_t \Big] 
 = \sup_{r,s,a} A_{r,s,a}^T \; l[x]\,,
\end{multline*}
where we have used the fact that the coefficients of $e_s$ besides position $s$ are zero,   that $e_r^Te_t = 0$ if $r \neq t$ and we have denoted $A_{r,s,a} = (e_re_s^T) \otimes A_a$ when $\tau^d(r,a) = s$, and $A_{r,s,a} = 0_{n,n}$ otherwise.

Hence, the map $T^d$ is positively irreducible if and only if the map
\begin{align*}
y \mapsto \sup_{r,s,a} A_{r,s,a}^T y
\end{align*}
 is positively irreducible. This reduces to the case $d = 0$ where the set of matrices $\A$ is replaced by the set $\{ A_{r,s,a} \}$.
\end{proof}

The term ``hierarchy'' for the sequence of problems~\eqref{hier} is 
justified by the following proposition.

\begin{proposition}
Suppose that the set of nonnegative matrices $\A$ is positively-irreducible.
Then $r(T^{d+1}) \leq r(T^d)$ for all $d$.
\end{proposition}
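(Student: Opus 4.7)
My plan is to lift a sub-eigenvector of $T^d$ to a sub-eigenvector of $T^{d+1}$ with essentially the same eigenvalue, and then invoke the first Collatz-Wielandt formula in~\Cref{thm:cw}. The key idea is that the De Bruijn automaton of length $d+1$ projects onto the De Bruijn automaton of length $d$ by forgetting the oldest letter, and this projection is a factor map of automata.

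More concretely, I would fix $\epsilon > 0$ and use the ``inf'' half of~\Cref{thm:cw} applied to $T^d$ on the cone $\prod_{s \in \p^d} \rnp$ to obtain a vector $u$ in the interior of this cone satisfying $T^d(u) \leq (r(T^d) + \epsilon) u$. I would then define the projection $\pi\colon \p^{d+1} \to \p^d$ by $\pi(i_1 i_2 \cdots i_{d+1}) = i_2 \cdots i_{d+1}$, and lift $u$ to $v \in \prod_{s' \in \p^{d+1}} \rnp$ by setting $v_{s'} \coloneqq u_{\pi(s')}$. Since each $u_s$ lies in the interior of $\rnp$, so does each $v_{s'}$.

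The main computational step is to verify the inequality $T^{d+1}_{s'}(v) \leq T^d_{\pi(s')}(u)$. For $s' = i_1 \cdots i_{d+1}$, any transition $\tau^{d+1}(r', a) = s'$ must take the form $r' = j_0 i_1 \cdots i_d$ and $a = i_{d+1}$; then $\pi(r') = i_1 \cdots i_d$ and one checks $\tau^d(\pi(r'), a) = i_2 \cdots i_d i_{d+1} = \pi(s')$. Hence $A_a^T v_{r'} = A_{i_{d+1}}^T u_{\pi(r')}$ is among the terms making up $T^d_{\pi(s')}(u)$, and taking the supremum over admissible $(r', a)$ at level $d+1$ yields the claimed inequality. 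Chaining with $T^d_{\pi(s')}(u) \leq (r(T^d)+\epsilon) u_{\pi(s')} = (r(T^d)+\epsilon) v_{s'}$ gives $T^{d+1}(v) \leq (r(T^d) + \epsilon) v$ coordinate-wise.

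I then apply the first Collatz-Wielandt formula of~\Cref{thm:cw} in the other direction to conclude $r(T^{d+1}) \leq r(T^d) + \epsilon$, and let $\epsilon \to 0$. The only subtle point, and where I would be careful, is the indexing convention in the verification of the factor map property: choosing $\pi$ to drop the \emph{first} letter (rather than the last) is what makes the switching letter $a$ match on both sides, so that a transition at level $d+1$ projects to a bona fide transition at level $d$. Other than this combinatorial bookkeeping, the argument is essentially a direct application of the variational characterization of the non-linear spectral radius.
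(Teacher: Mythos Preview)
Your proposal is correct and follows essentially the same approach as the paper: lift a sub-eigenvector of $T^d$ to level $d+1$ via the projection $\pi$ dropping the first letter (the paper writes this as $v_{ar}\coloneqq u_r$), verify $T^{d+1}(v)\leq\lambda v$, and conclude by the first Collatz--Wielandt formula. The only cosmetic differences are that the paper bounds $A_b^T v_{ar}$ directly by $\lambda v_{\tau^{d+1}(ar,b)}$ rather than passing through the intermediate inequality $T^{d+1}_{s'}(v)\leq T^d_{\pi(s')}(u)$, and takes an infimum over $\lambda$ instead of your equivalent $\epsilon\to 0$ argument.
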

\begin{proof}
Let $u \in \interior \prod_{s \in \p^d} \rnp$ and $\lambda$ a positive real number such that $T^d(u) \leq \lambda u$ for some $d$.
Let $v$ denote the vector in $\prod_{s \in \p^{d+1}} \rnp$ defined for all $r\in \p^d$ and $a\in \p$ by
$
v_{ar} \coloneqq u_r 
$.
We have
\begin{align*}
A_b^T v_{ar} =  A_b^T  u_r 
\leq \lambda u_{\tau^d(r,b)}
 = \lambda v_{\tau^{d+1}(ar,b)} \,.
\end{align*}
 Taking the supremum over $r$ and $a$, we obtain $T^{d+1}(v) \leq \lambda v$.
Each vector $v_{ar}$ is positive, so taking the infimum over $\lambda$, 
by the first of the Collatz-Wielandt formul\ae\ in~\Cref{thm:cw}, we arrive at $r(T^{d+1})\leq r(T^d)$.
\end{proof}

\section{Convergence of the hierarchy of nonlinear eigenproblems}
\label{sec-convergence}

The next theorem shows that the spectral radius of the map $T^d$ approximates the joint spectral radius $\rho(\A)$ up to a factor $n^{1/(d+1)}$.
The proof of this result is inspired by the ones found in~\cite{pathcomplete,matthew} in the case of piecewise quadratic approximations of norms. The latter proofs rely on the approximation of a symmetric convex body by the \emph{L\"owner-John ellipsoid}. Here, we use the fact that a monotone hemi-norm $\nu$ can be approximated by a monotone linear map, up to a factor $n$, as shown
by the following observation. 

\begin{proposition}
\label{prop:c_mono}
Given a monotone hemi-norm $\nu$, there is a vector $c$ with positive entries such that
\begin{align*}
\nu(x) \leq \scal{c}{x} \leq n \nu(x) \,,
\end{align*}
for all nonnegative vectors $x$.
\end{proposition}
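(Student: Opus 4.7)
The plan is to take the candidate $c$ to be the vector of values of $\nu$ on the canonical basis, that is, to set
\[
c_i \coloneqq \nu(e_i), \qquad i = 1, \dots, n,
\]
where $e_1, \dots, e_n$ is the standard basis of $\R^n$. Each $e_i$ is a nonzero nonnegative vector, so the positive-definiteness requirement in the definition of a monotone hemi-norm ($\nu(y)=0$, $y\geq 0\implies y=0$) forces $c_i > 0$. Thus the vector $c$ is componentwise positive, as required.

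The upper bound $\nu(x) \leq \scal{c}{x}$ will follow from sublinearity. Since $\nu$ is convex and positively homogeneous of degree $1$ on $\rnp$, it is subadditive on $\rnp$. Writing any $x\in \rnp$ as $x = \sum_i x_i e_i$ with $x_i \geq 0$, I would then apply subadditivity and positive homogeneity coordinate by coordinate, obtaining
\[
\nu(x) \;\leq\; \sum_{i=1}^n \nu(x_i e_i) \;=\; \sum_{i=1}^n x_i\, \nu(e_i) \;=\; \scal{c}{x}.
\]

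For the lower bound $\scal{c}{x} \leq n\,\nu(x)$, I would use monotonicity. For each $i$, since $x\in\rnp$ we have $x_i e_i \leq x$, so monotonicity together with positive homogeneity yields $x_i\,\nu(e_i) = \nu(x_i e_i) \leq \nu(x)$. Summing over the $n$ indices gives
\[
\scal{c}{x} \;=\; \sum_{i=1}^n x_i\, \nu(e_i) \;\leq\; n\, \nu(x).
\]

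There is no real obstacle here: the proof is just the observation that the values of $\nu$ on the axes provide the right linear dominant, with convexity controlling one inequality and monotonicity the other. The only point worth emphasising is that the positive-definiteness clause in the definition of a monotone hemi-norm (which excludes degenerate cases such as $\nu(x)=\sum_{i\in I} x_i$ for $I\subsetneq\{1,\dots,n\}$) is what guarantees $c$ has strictly positive entries, a property that is needed later when $c$ is used as an interior point for Collatz--Wielandt-type arguments.
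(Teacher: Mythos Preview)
Your proof is correct and follows essentially the same approach as the paper: define $c_i=\nu(e_i)$, use convexity plus homogeneity (i.e., subadditivity) for the upper bound, and monotonicity on $x_ie_i\leq x$ summed over $i$ for the lower bound. Your write-up is in fact slightly more detailed than the paper's, making explicit why $c_i>0$ and why subadditivity holds.
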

\begin{proof}
Let $c$ denote the vector defined by $c_i \coloneqq \nu(e_i)$, with $(e_i)_{1\leq i \leq n}$ the canonical basis of $\R^n$, and observe that $c_i$ is positive
since $\nu$ is a hemi-norm. Let $x$ denote a nonnegative vector.
By convexity and homogeneity of the map $\nu$, we have $\nu(x) \leq \sum_i x_i \nu(e_i) = \scal{c}{x}$.
By monotonicity of $\nu$, we have $x_i c_i = \nu(x_i e_i) \leq \nu(x)$, hence $\scal{c}{x} \leq n \nu(x)$.
\end{proof}

We now show that the sequence of approximations provided by this hierarchy do converge when $d$ tends to infinity.

\begin{theorem}
\label{thm:upper}
Suppose that the set of nonnegative matrices $\A$ is positively-irreducible. Then
\begin{align*}
r(T^d) \leq n^{1/(d+1)}\rho(\A)\,.
\end{align*}
\end{theorem}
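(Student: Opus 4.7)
The plan is to produce, for each $d \geq 0$, a nonnegative $u \in \prod_{s \in \p^d} \rnp$ for which $(T^d)^{d+1}(u) \leq n\rho(\A)^{d+1} u$, and then to invoke the first Collatz-Wielandt identity of~\Cref{thm:cw} applied to $(T^d)^{d+1}$. Since $r((T^d)^{d+1}) = r(T^d)^{d+1}$, the bound $r((T^d)^{d+1}) \leq n\rho(\A)^{d+1}$ translates immediately into $r(T^d) \leq n^{1/(d+1)}\rho(\A)$, which is the desired estimate.

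To build $u$, I would first invoke~\Cref{thm:exis_mono} to obtain a monotone invariant norm $\nu$ for $\A$, which in particular satisfies $\nu(A_a x) \leq \rho(\A)\nu(x)$ for every $a \in \p$ and every $x \geq 0$. Then~\Cref{prop:c_mono} furnishes a strictly positive vector $c$ with $\nu(x) \leq \scal{c}{x} \leq n\nu(x)$ on $\rnp$. I then set $u_s \coloneqq (A_{i_1}A_{i_2}\cdots A_{i_d})^T c$ for every $s = i_1\cdots i_d \in \p^d$; this is the linear analogue of the quadratic pullbacks used in the convergence proofs of the path-complete scheme in~\cite{pathcomplete,matthew}.

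The main computation is to unfold $(T^d)^{d+1}_s(u)$ by composing $d+1$ copies of $T^d$. Using the De Bruijn transition $\tau^d$ iteratively, one checks that the iterate takes the nested form
\begin{align*}
(T^d)^{d+1}_s(u) = \sup_{j_1}A_{i_d}^T \sup_{j_2}A_{i_{d-1}}^T \cdots \sup_{j_d}A_{i_1}^T \sup_{j_{d+1}} A_{j_1}^T A_{j_2}^T \cdots A_{j_{d+1}}^T c \enspace ,
\end{align*}
where the $d$ ``outer'' factors $A_{i_d}^T,\dots,A_{i_1}^T$ are fixed by $s$ and only the innermost product of $d+1$ matrices involves free sup variables. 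For that inner chain, Collatz duality combined with~\Cref{prop:c_mono} (used once) and the extremality of $\nu$ (iterated $d+1$ times) gives
\begin{align*}
\scal{A_{j_1}^T \cdots A_{j_{d+1}}^T c}{z} = \scal{c}{A_{j_{d+1}} \cdots A_{j_1} z} \leq n\nu(A_{j_{d+1}} \cdots A_{j_1} z) \leq n\rho(\A)^{d+1}\nu(z) \leq n\rho(\A)^{d+1}\scal{c}{z}
\end{align*}
for every $z \geq 0$, so that $A_{j_1}^T \cdots A_{j_{d+1}}^T c \leq n\rho(\A)^{d+1}\, c$ entrywise, uniformly in the $j_l$'s. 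The decisive point is that the factor $n$ from~\Cref{prop:c_mono} enters only once, regardless of the depth $d$. Each of the $d$ outer nonnegative maps $A_{i_l}^T$ can then be pulled through the remaining suprema by monotonicity, yielding $(T^d)^{d+1}_s(u) \leq (A_{i_1}\cdots A_{i_d})^T (n\rho(\A)^{d+1}c) = n\rho(\A)^{d+1} u_s$.

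The main technical obstacle will be that $u$ need not lie in the interior of $\prod_s \rnp$: the coordinate $(u_s)_k$ vanishes as soon as $A_{i_1}\cdots A_{i_d}e_k = 0$, which can occur even under positive-irreducibility of $\A$. To legitimately invoke the first Collatz-Wielandt identity in~\Cref{thm:cw}, which is stated for test vectors in the interior, I would resort to a standard perturbation: set $A_a^\eta \coloneqq A_a + \eta J$ with $J$ the all-ones matrix and $\eta > 0$, so that $\A^\eta$ consists of strictly positive matrices, the associated $u^\eta$ is strictly positive, and the preceding argument yields $r(T^d_\eta) \leq n^{1/(d+1)}\rho(\A^\eta)$. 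Since $T^d_\eta \geq T^d$ pointwise on the product cone, the spectral radius of a monotone positively homogeneous map is monotone under pointwise domination, so $r(T^d) \leq r(T^d_\eta)$; combining this with the continuity of the joint spectral radius in the matrix entries and letting $\eta \to 0$ closes the argument.
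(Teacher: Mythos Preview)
Your argument is correct. The unfolding of $(T^d)^{d+1}$ along the De Bruijn automaton is accurate, the single use of~\Cref{prop:c_mono} on the inner product of $d+1$ matrices is the right place to spend the factor $n$, and the perturbation $A_a^\eta = A_a + \eta J$ legitimately pushes $u^\eta$ into the interior so that the first Collatz--Wielandt identity applies to $(T^d_\eta)^{d+1}$.

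The paper takes a different, two-step route. It introduces the auxiliary map $\widehat{T}^d(x)=\sup_{M\in\A^{d+1}}M^Tx$ and first proves the inequality $r(T^d)\leq r(\widehat{T}^d)^{1/(d+1)}$ by exhibiting a vector that satisfies a \emph{one-step} inequality $T^d(u)\leq \mu^{1/(d+1)}u$: namely, given $v>0$ with $\widehat{T}^d(v)\leq \mu v$, it sets
\[
u_s=\sum_{k=0}^{d}\mu^{-k/(d+1)}A_{s(1)}^T\cdots A_{s(k)}^T v,
\]
which telescopes under $A_a^T$ and is automatically positive because $u_s\geq v$. It then bounds $r(\widehat{T}^d)\leq n\rho(\A)^{d+1}$ by reducing to the $d=0$ case applied to $\A^{d+1}$ (again via a perturbation). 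Your approach avoids $\widehat{T}^d$ and the telescoping sum entirely, at the price of unfolding $(T^d)^{d+1}$ and bookkeeping the De Bruijn indices; the paper's approach yields the intermediate inequality $r(T^d)\leq r(\widehat{T}^d)^{1/(d+1)}$, which may be of independent interest, and sidesteps the interior issue for $u$ by construction rather than by perturbation.
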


\begin{proof}
We first prove the case $d=0$, in which case the map $T^0$ is positively-irreducible. By~\Cref{thm:exis_mono}, the set $\A$ admits a monotone invariant hemi-norm denoted by $\nu$.
 As noted earlier,
By~\Cref{prop:c_mono} 
  there is a positive vector $c$ such that $\scal{c}{x} \leq \nu(x) \leq n\scal{c}{x}$ holds for all nonnegative vectors $x$. Since $\nu$ is a monotone invariant hemi-norm, we have $\scal{A_a^Tc}{x}=\scal{c}{A_ax} \leq \nu(A_ax) \leq \rho(\A) \nu(x) \leq n\rho(\A) \scal{c}{x}$ for all nonnegative vectors $x$, hence 
$\scal{n\rho(\A) c-A_a^T c}{x}\geq 0$ for all such vectors $x$, which implies
that $n\rho(\A) c-A_a^T c\geq 0$, i.e.,  $A_a^T c\leq n\rho(\A)$.
Taking the supremum over $a$, we get $T^0(c) \leq n\rho(\A) c$. Thus $r(T^0) \leq n\rho(\A)$ by~\Cref{thm:cw}.

We now prove the general case. It will be convenient to consider the variant of the map $T^d$ obtained by replacing the set of matrices $\A$ by the set $\A^{d+1}$ of products in $\A$ of length $d+1$, yielding the map on $\R^n$ given by:
\begin{align*}
\widehat{T}^d(x)  = \sup_{ M \in \A^{d+1}} M^Tx
\end{align*}
Now, let $v$ denote a positive vector in $\rnp$ and $\mu$ a positive real number such that $\widehat{T}^d(v) \leq \mu v$. By~\Cref{thm:cw}, the infimum of such real numbers $\mu$ is equal to the spectral radius $r(\widehat{T}^d)$. We introduce the collection of vectors $u = (u_s)_{s \in \p^d}$ defined by
\begin{align}
u_s = \sum_{0 \leq k \leq d} \mu^{-k/(d+1)} 
A^T_{s(1)} \cdots A^T_{s(k)} 
 v \,,
\label{e-note}
\end{align}
where $s(i)$ denotes the $i$-th letter of the word $s \in \p^d$,
with the convention that the product in the summation~\eqref{e-note}
is equal to $v$ when $k=0$.
It is readily seen that this collection satisfies the set of inequalities $A^T_a u_r \leq \mu^{1/(d+1)} u_{\tau(r,a)}$.
Moreover, by definition of $u_s$, $u_s\geq v$, and so $u_s$ is positive.
We deduce that $T^d(u) \leq \mu^{1/(d+1)} u$, hence $r(T^d) \leq r(\widehat{T}^d)^{1/(d+1)}$ by~\Cref{thm:cw}.

It remains to be shown that $r(\hT^d) \leq n \rho(\A)^{d+1}$. Now, we consider the perturbed map $\hT^d_\varepsilon$ defined for $\varepsilon > 0$ by
\begin{align*}
\hT^d_\varepsilon(x) \coloneqq \sup \big\{ M^Tx \colon M \in \A_\varepsilon^{d+1} \big\} \,,
\end{align*}
originating from the family of perturbed matrices $\A_\varepsilon \coloneqq \{ A_a + \varepsilon J \}_{a \in \p}$, 
where $J$ is the square matrix with all entries equal to $1$.
 The matrices in $\A_\varepsilon$ are positive hence the latter set is positively-irreducible. Thus we fall in the case $d=0$ and we obtain 
 \begin{align*}
r(\hT^d_\varepsilon) \leq \rho(\A_\varepsilon^{d+1}) = \rho(\A_\varepsilon)^{d+1} \,.
 \end{align*}
 Moreover, the inequality $\hT^d(x) \leq \hT^d_\varepsilon(x)$ holds for all nonnegative vectors $x$, hence $r(\hT^d) \leq r(\hT^d_\varepsilon)$ by~\cite{nussbaumlemmens}. Combined with the fact that $\lim_{\varepsilon \to 0} \rho(\A_\varepsilon) = \rho(\A)$ as proved in~\cite{jungers}, we obtain the desired inequality.
\end{proof}

We obtain as an immediate corollary of~\Cref{thm:over,thm:upper} that the hierarchy is convergent, in the sense that any sequence of eigenvalues of the map $T^d$ converges towards the joint spectral radius.

\begin{corollary}
Suppose that the set of nonnegative matrices $\A$ is positively-irreducible. If $\lambda_d$ denotes an eigenvalue of the map $T^d$ for all $d$, then
\begin{align*}
\lim_{d \to \infty} \lambda_d = \rho(\A) \,.
\end{align*}
In particular, the sequence of spectral radii $r(T^d)$ is non-increasing and its limit is equal to $\rho(\A)$.
\end{corollary}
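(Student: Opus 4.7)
The plan is to combine the two bounds established in the preceding theorems with the monotonicity proposition to squeeze $\lambda_d$ between two sequences that both tend to $\rho(\A)$.

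First, I would invoke Theorem \ref{thm:over}, which gives the double inequality
\begin{align*}
\rho(\A) \leq \lambda_d \leq r(T^d)
\end{align*}
for any eigenvalue $\lambda_d$ of $T^d$. Next, I would apply Theorem \ref{thm:upper}, which furnishes the matching upper bound
\begin{align*}
r(T^d) \leq n^{1/(d+1)} \rho(\A).
\end{align*}
Chaining these yields
\begin{align*}
\rho(\A) \leq \lambda_d \leq r(T^d) \leq n^{1/(d+1)} \rho(\A),
\end{align*}
and since $n^{1/(d+1)} \to 1$ as $d \to \infty$, the squeeze principle gives $\lambda_d \to \rho(\A)$, as well as $r(T^d) \to \rho(\A)$.

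For the ``in particular'' clause, the monotonicity $r(T^{d+1}) \leq r(T^d)$ is exactly the content of the proposition immediately preceding Section \ref{sec-convergence}, so nothing further is required beyond citing it and noting that the limit value has already been identified as $\rho(\A)$ in the previous step. There is no real obstacle here: the corollary is a genuine immediate consequence, and the proof should be only a few lines long, essentially a sandwich argument together with a citation of the monotonicity result.
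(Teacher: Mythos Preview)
Your proposal is correct and matches the paper's own approach exactly: the paper states that the corollary is an immediate consequence of \Cref{thm:over,thm:upper}, and your sandwich argument $\rho(\A) \leq \lambda_d \leq r(T^d) \leq n^{1/(d+1)}\rho(\A)$ together with the citation of the monotonicity proposition for the ``in particular'' clause is precisely what is intended.
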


\section{Solving the non-linear eigenproblem}
\label{sec-solving}

Several numerical methods allow one to solve the nonlinear eigenproblem~\eqref{hier}. First, the log-convexity property of $T^d$ allows a reduction to convex programming, which entails a polynomial time bound (see for instance the part of~\cite{akian_et_al:LIPIcs:2017:7026} concerning ``Despot free'' entropy games).  
There are also algorithms, more efficient in practice, that do not have
polynomial time bounds. 
Protasov proposed a ``spectral simplex'' algorithm~\cite{protasov}.
A policy iteration scheme was proposed in~\cite{akian_et_al:LIPIcs:2017:7026}.
The spectral simplex, like policy iteration, involves at each step
the computation of the spectral radius of a nonnegative matrix, which
is generally the bottleneck.

For the huge scale instances which are of interest here, it is more convenient to employ a simpler iterative scheme. We propose to use 
a projective version of the Krasnoselskii-Mann iteration~\cite{mann,krasno}.
The Krasnoselskii-Mann iteration can be written as $x_{k+1}=2^{-1}(x^k +F(x^k))$,
it was originally considered when $F$ is a nonexpansive mapping $F$ 
acting on a uniformly convex Banach space~\cite{krasno}. 
The uniform convexity assumption was relaxed by Ishikawa:
\begin{theorem}[{\cite[Theorem 1]{ishikawa}}]\label{ishikawa}
Let $D$ be a closed convex subset of a Banach space $X$, let $F$
be a nonexpansive mapping sending $D$ to a compact subset of $D$.
Then, for any initial point $x^0\in D$, the sequence defined by 
$x_{k+1}=2^{-1}(x^k +F(x^k))$ 
converges to a fixed point of $F$. 
\end{theorem}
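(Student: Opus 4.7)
The plan is to establish the theorem in four steps: existence of a fixed point, Fejér monotonicity of the orbit, asymptotic regularity, and convergence. For the first step, since $F$ sends the closed convex set $D$ into a compact subset of $D$, Schauder's fixed-point theorem produces at least one $p\in D$ with $F(p)=p$; this reference point is needed to obtain boundedness of the orbit, even though a priori it need not coincide with the limit we seek. For Fejér monotonicity, convexity of the norm and nonexpansiveness of $F$ give, for any fixed point $p$,
\[\norm{x^{k+1}-p}\le 2^{-1}\norm{x^k-p}+2^{-1}\norm{F(x^k)-F(p)}\le \norm{x^k-p},\]
so $\{\norm{x^k-p}\}$ is non-increasing and in particular the orbit $\{x^k\}$ is bounded.

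Next I would establish the asymptotic regularity property $\norm{F(x^k)-x^k}\to 0$, or equivalently $\norm{x^{k+1}-x^k}\to 0$. Writing $\Delta_k=x^{k+1}-x^k=2^{-1}(F(x^k)-x^k)$, a short computation using nonexpansiveness of $F$ and the averaged form of the iteration shows $\norm{\Delta_{k+1}}\le \norm{\Delta_k}$, so $\norm{\Delta_k}$ decreases to some $\delta\ge 0$. Showing $\delta=0$ is the main obstacle: unlike in Krasnoselskii's original proof, uniform convexity of $X$ is not available, so one must invoke Ishikawa's inductive inequality, a purely metric telescoping estimate valid in any Banach space that relates $\norm{F(x^i)-x^i}$, $\norm{x^{i+j}-x^i}$ and the averaging weight $1/2$; combined with the boundedness obtained above and letting $j\to\infty$, it forces $\delta=0$.

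Finally, the compactness hypothesis says $\{F(x^k)\}$ is relatively compact, so a subsequence $F(x^{k_j})$ converges to some $y\in D$; asymptotic regularity yields $x^{k_j}\to y$, continuity of $F$ gives $F(y)=y$, and Fejér monotonicity applied with $p=y$ ensures the whole sequence converges to $y$. Everything besides the asymptotic regularity step is essentially automatic from nonexpansiveness, Schauder's theorem, and Fejér monotonicity; the real work lies in Ishikawa's combinatorial estimate for averaged nonexpansive iterations.
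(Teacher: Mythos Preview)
The paper does not supply its own proof of this theorem: it is quoted verbatim as a classical result of Ishikawa and is then used as a black box in the proof of \Cref{th-converge}. So there is no argument in the paper to compare your proposal against.

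That said, your outline is a faithful reconstruction of Ishikawa's original proof. The four-step decomposition (Schauder fixed point, Fej\'er monotonicity, asymptotic regularity via Ishikawa's combinatorial inequality, and compactness plus Fej\'er monotonicity to pass from subsequential to full convergence) is exactly the structure of the argument in~\cite{ishikawa}, and you correctly locate the only nontrivial ingredient in the asymptotic regularity step, where uniform convexity is replaced by Ishikawa's telescoping estimate. One small point: your appeal to Schauder requires that $D$ itself be a suitable domain for the theorem, but since $F(D)$ lies in a compact set $K\subset D$, one can simply apply Schauder on the closed convex hull of $K$, which is compact and mapped into itself by $F$; this is implicit in your sketch but worth making explicit.
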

The analysis of this iteration, by Edelstein and O'Brien~\cite{obrien},
involves the notion of {\em asymptotic regularity}. The latter property
means that $\|F(x^k)-x^{k}\|$ tends to $0$ as $k$ tends to
infinity. The estimate $\|F(x^k)-x^{k}\|$ provides a convenient
way to measure the convergence. 
Baillon and Bruck obtained in~\cite{baillonbruck} the following
quantitative asymptotic regularity estimate
\begin{align}
\|F(x^k)-x^{k}\|\leq \frac{2\,\operatorname{diam}(D)}{\sqrt{\pi k}} \enspace,
\label{baillonbruck}
\end{align}
see~\cite{cominetti} for more information.
Observe that the rate $1/\sqrt{k}$ is independent of the dimension.

Here, we adapt the idea of the Krasnoselskii-Mann iteration
to the eigenproblem, by considering it as a fixed
point problem in the projective space. 

It will be convenient to consider an arbitrary
monotone and positively
homogeneous (of degree one) $f: \R_+^N \to \R_+^N$, 
having a positive eigenvector
$u$. In the present application, we will
consider the special case $f:=T^d$, however, the scheme does
converge in a rather general setting, and it may have
other applications.
\begin{definition}[Projective Krasnoselskii-Mann iteration]
Starting from any positive vector $v^0\in \R_+^N$ such that $\prod_{i\in [N]}v^0_i=1$, compute the sequence defined by
\begin{align}
\label{eq:krasn}
v^{k+1} & = \Bigg[ \frac{f(v^k) }{G\big[f(v^k) \big]} \circ v^{k} \Bigg]^{1/2} \,,
\end{align}
where $\circ$ denotes the entrywise product of two vectors.
and $G(x) = (x_1 \cdots x_N)^{1/N}$ denotes the geometric mean of the components of the vector $x$.
\end{definition}
By comparison with the original Krasnoselskii-Mann iteration,
the arithmetic mean is replaced by the geometric
mean, and a normalization is introduced to deal with the projective
setting. 

To show that this iteration does converge, we need
to recall some metric properties of monotone positively homogeneous maps.
We shall use a seminorm called {\em Hopf's oscillation}~\cite{Bus73}
or {\em Hilbert's seminorm}~\cite{gg04}. The latter
is defined on $\R^N$ by
\[ \norm{x}_H = \inf\{\beta - \alpha \colon \alpha,\beta\geq 0, \; \; \alpha e \leq x \leq \beta e \}
\enspace ,\]
with $e = (1 \, \cdots \, 1)^T$.
This seminorm is invariant by addition with a constant ($\norm{x + \alpha e}_H = \norm{x}_H$). Observe that Hopf's
oscillation defines a norm on the vector space $X:=\{x\in \R^N\colon \sum_i x_i=0\}$.
The {\em Hilbert's projective metric}~\cite{nussbaum88}, defined on the interior of the cone $\R_+^N$, is given by
\[ d_H(x,y) = \|\log x -\log y\|_H \enspace ,
\]
where $\log$ is understood entrywise, meaning that 
$\log x:= (\log x_i)_{i\in [N]}$. 
Observe that
$d_H(\alpha x, \beta y)=$ $d_H(x,y)$ for all $\alpha,\beta>0$; so
$d_H$ defines a metric on the space of rays included in the interior
of the cone. We shall also use {\em Thompson's metric}, defined
on the interior of $\R_+^N$ by 
\[
d_T(x,y) = \|\log x -\log y\|_\infty \enspace ,
\]
where $\|\cdot\|_\infty$ is the sup-norm. It is known that if $f$ is monotone
and positively homogeneous, and if it preserves the interior
of $\R_+^N$, then it is nonexpansive both in Hilbert's
projective metric and in Thompson's metric, see e.g.~\cite[Lemma~4.1]{agn12}.

We next show that the scheme~\eqref{eq:krasn} does converge.
\begin{theorem}\label{th-converge}
Suppose that $f$ is a monotone positively homogeneous map $\R_+^N\to \R_+^N$
having a positive eigenvector. Then,
the iteration in~\eqref{eq:krasn} initialized at any positive vector
$v^0\in \R_+^N$ such that $\prod_{i\in [N]}v^0_i=1$, 
converges towards an eigenvector of $f$,
and $G(f(v^k))$ converges to $r(f)$. 
\end{theorem}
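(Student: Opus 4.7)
The plan is to pass to logarithmic coordinates, where \eqref{eq:krasn} becomes a Krasnoselskii-Mann iteration on a linear subspace equipped with a natural norm. Setting $x^k := \log v^k$ componentwise and introducing the log-conjugate $\tilde{f}(x) := \log f(\exp x)$, the normalization $\prod_i v^0_i = 1$ places $x^0$ in the hyperplane $X := \{x \in \R^N \colon \sum_i x_i = 0\}$. A direct computation shows that \eqref{eq:krasn} rewrites as
\begin{align*}
x^{k+1} = \tfrac{1}{2}\bigl(x^k + g(x^k)\bigr), \qquad g(x) := \tilde{f}(x) - \tfrac{1}{N}\langle e, \tilde{f}(x)\rangle e,
\end{align*}
so that $g$ is $X$-valued and the whole sequence $(x^k)$ stays in $X$.

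Next I would verify that $g$ is well-defined and nonexpansive in Hopf's oscillation seminorm $\|\cdot\|_H$, and admits a fixed point. Since $u>0$ is an eigenvector of $f$, the Collatz-Wielandt formula in \Cref{thm:cw} gives $f(u) = r(f) u$; comparability of positive vectors to $u$ together with monotonicity and positive homogeneity ensures that $f$ maps the positive cone into itself with no vanishing coordinate in the nondegenerate case $r(f) > 0$, so that $\tilde{f}$ is defined on all of $\R^N$. The map $\tilde{f}$ is monotone and commutes with addition of constants, $\tilde{f}(x + c e) = \tilde{f}(x) + c e$, hence is nonexpansive in $\|\cdot\|_H$ by a classical argument (see e.g.~\cite[Lemma~4.1]{agn12}). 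Since $\|\cdot\|_H$ is invariant under addition of multiples of $e$, $g$ inherits the same nonexpansiveness, and $\|\cdot\|_H$ restricts to a genuine norm on the finite-dimensional space $X$. Setting $\hat{u} := u / G(u)$, so that $\log \hat{u} \in X$, one checks directly that $\tilde{f}(\log \hat{u}) = \log \hat{u} + \log r(f) \cdot e$ and hence $g(\log \hat{u}) = \log \hat{u}$.

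The closed ball $D := \{x \in X \colon \|x - \log \hat{u}\|_H \leq \|x^0 - \log \hat{u}\|_H\}$ is convex, $g$-invariant by nonexpansiveness around the fixed point $\log \hat{u}$, and compact because $(X, \|\cdot\|_H)$ is a finite-dimensional Banach space. Ishikawa's theorem (\Cref{ishikawa}) then applies to the nonexpansive self-map $g \colon D \to D$ and yields convergence of $(x^k)$ to some fixed point $\bar{x}$ of $g$. Exponentiating, $\bar{v} := \exp \bar{x}$ satisfies $f(\bar{v}) = G(f(\bar{v}))\,\bar{v}$, so it is a positive eigenvector of $f$, and the associated eigenvalue $\bar{\lambda} := G(f(\bar{v}))$ equals $r(f)$ by the first Collatz-Wielandt formula in \Cref{thm:cw}; continuity of $G \circ f$ finally gives $G(f(v^k)) \to r(f)$. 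The main obstacle, in my view, is not conceptual depth but the careful bookkeeping between the multiplicative projective iteration in the cone and its additive counterpart on $X$; once that reduction is in place, everything follows from nonexpansiveness in $\|\cdot\|_H$, compactness in finite dimension, \Cref{ishikawa}, and the nonlinear Collatz-Wielandt characterization of $r(f)$.
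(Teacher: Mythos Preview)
Your proposal is correct and follows essentially the same route as the paper: pass to logarithmic coordinates, recognize the iteration as the standard Krasnoselskii--Mann scheme for the map $\widehat{S}$ (your $g$) on the hyperplane $X$ equipped with Hopf's oscillation norm, confine the orbit to a compact convex ball about the fixed point $\log\hat{u}$, invoke \Cref{ishikawa}, and exponentiate back. The only cosmetic difference is that you normalize $u$ so that $\log\hat u\in X$, while the paper works with the seminorm ball $B_r$ and intersects with $X$; the substance is identical.
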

We next show that this reduces to the convergence of the original scheme,
after a suitable transformation.
\begin{proof}
Let $u$ denote a positive eigenvector of $f$,
so that
$f(u)= \lambda u$
for some $\lambda >0$.  
For all $x$ in the interior of $\R_+^N$, we can write $\alpha u\leq x$
for some $\alpha>0$, and since $f$ is order preserving and positively homogeneous, we deduce that $f(x)\geq \alpha f(u) =\alpha u$, so $f$ preserves
the interior of $\R_+^N$. 
We now define the self-map $S$ of $\R^N$ by
\begin{align}
S(y) = \log  f \big[\exp (y) \big] 
\enspace ,\label{e-def-S}
\end{align}
where, again, the notation $\log$ for a vector is understood entrywise,
and similarly for $\exp$.

The map $S$ is monotone and commutes with the addition of a constant. It follows that the map $S$ and the map 
\begin{align}
\widehat{S} \colon y \mapsto S(y) - N^{-1}\scal{e}{S(y)} e
\label{e-def-Shat}
\end{align}
are also non-expansive with respect to Hopf's oscillation, see e.g.~\cite[Lemma~4.1]{agn12}.

We also note that $S(\log u) = \log u + (\log \lambda) e$.

Given $r > 0$, we consider $B_r \coloneqq \{ x \in \R^N \colon \norm{x-\log u}_H \leq r \}$.
 The set $B_r$ is invariant by the map $S$, since, using the nonexpansiveness
of this map in Hopf's oscillation
\begin{align*}
\norm{S(x)- \log u}_H
& =\norm{S(x) - \log u - (\log(\lambda+1)) e }_H \\
& =\norm{S(x) - S( \log u) }_H \\
& \leq \norm{x- \log u}_H \,.
\end{align*} 
The same holds for the map $\widehat{S}$ since it only differs from $S$ by addition of a multiple of the vector $e$.
Moreover, the vector $\log v^0$ belongs to $B_r$ for $r$ large enough. We fix such an $r$ in the sequel.
Observe that $\widehat{S}$ is a nonexpansive self-map of the normed
space $X$ equipped with Hopf's oscillation, and that this map leaves
invariant the 
set $B_r \cap X$. The latter set is closed and bounded in the Euclidean metric, hence it is compact. It is also convex. By~\Cref{ishikawa}, the iterative process defined by
\begin{align}
\label{eq:iter_log}
y^{k+1} = \frac{1}{2} \widehat{S}(y^k) + \frac{1}{2} y^k \enspace,
\end{align}
initialized at any point $y^0\in B_r\cap X$,
converges towards some vector $y \in B_r\cap X$. This limit satisfies $\widehat{S}(y) = y$.

By writing $v = \exp(y)$ and $v^k = \exp(y^k)$, we rewrite~\Cref{eq:iter_log} to obtain the iteration in~\Cref{eq:krasn}, and observe
that the condition $\prod_{i\in [N]}v^0_i =1$ entails $y^0\in X$.
Hence, the sequence $v^k$ converges to $v$. Recall that $f$ is nonexpansive in Thompson's metric and that
the latter induces in the interior of the cone $\R_+^N$ the euclidean topology.
It follows that $f$ is continuous,
with respect to this topology, on the interior of the cone. Hence, passing
to the limit in~\eqref{eq:krasn}, we obtain 
\[
v= 
\Bigg[ \frac{f(v) }{G\big[f(v) \big]} \circ v \Bigg]^{1/2} \,,
\]
and so $f(v)=\mu v$ with $\mu:= G[f(v)]$.
\end{proof}

The following quantitative version of \Cref{th-converge} shows that $f(v^k)$ becomes approximately proportional
to $v^k$ as $k$ tends to infinity, i.e., $v^k$ is an ``approximate eigenvector''. 
\begin{corollary}
Suppose that $f$ is a monotone positively homogeneous map $\R_+^N\to \R_+^N$
having a positive eigenvector. Then,
the sequence
$v^0,v^1,\dots$ constructed by the projective Krasnoselskii-Mann iteration
satisfies
\begin{align}
d_H(f(v^{k}),v^k) \leq \frac{4}{\sqrt{\pi k}} d_H(v^0,u) \enspace .
\label{e-bound}
\end{align}
\end{corollary}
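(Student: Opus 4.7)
The plan is to apply the Baillon-Bruck asymptotic regularity estimate~\eqref{baillonbruck} to the nonexpansive self-map $\widehat{S}$ of the invariant convex set $B_r\cap X$ constructed in the proof of~\Cref{th-converge}, and then translate the resulting bound back to the original sequence $(v^k)$ by undoing the $\log/\exp$ change of variables.

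First, I would choose $r \coloneqq d_H(v^0,u)$, which is exactly the minimal radius for which $\log v^0\in B_r$. Because $y^0=\log v^0$ lies in $B_r\cap X$ (the condition $\prod_i v^0_i=1$ ensures $y^0\in X$) and $\widehat{S}$ leaves $B_r\cap X$ invariant, the entire orbit $(y^k)$ defined by~\eqref{eq:iter_log} stays in this set. Viewed inside the Banach space $(X,\|\cdot\|_H)$, the set $B_r\cap X$ is closed, convex and bounded, hence compact in finite dimension, so Ishikawa's framework applies. A triangle inequality centered at $\log u$ yields the diameter bound $\operatorname{diam}(B_r\cap X)\leq 2r$.

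Next, I would invoke~\eqref{baillonbruck} with $F=\widehat{S}$ and $D=B_r\cap X$, which gives
\[
\|\widehat{S}(y^k)-y^k\|_H \leq \frac{2\operatorname{diam}(B_r\cap X)}{\sqrt{\pi k}} \leq \frac{4r}{\sqrt{\pi k}}\enspace.
\]
Finally, I would identify the left-hand side with $d_H(f(v^k),v^k)$: by~\eqref{e-def-Shat}, $\widehat{S}(y)-S(y)$ is a scalar multiple of $e$, and Hopf's oscillation is invariant under addition of multiples of $e$, so $\|\widehat{S}(y^k)-y^k\|_H = \|S(y^k)-y^k\|_H$; together with $y^k=\log v^k$ and $S(y^k)=\log f(v^k)$ this equals $\|\log f(v^k)-\log v^k\|_H = d_H(f(v^k),v^k)$, and substituting $r=d_H(v^0,u)$ yields~\eqref{e-bound}.

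I do not anticipate any substantive obstacle, since all the structural ingredients have been assembled in the proof of~\Cref{th-converge}. The only subtle point is to consistently work inside the codimension-one subspace $X$ rather than $\R^N$, so that Hopf's oscillation is a genuine norm and~\eqref{baillonbruck} applies verbatim; this is legitimate because both the projective normalization built into $\widehat{S}$ and the initial normalization $\prod_i v^0_i=1$ ensure that the iterates stay in $X$.
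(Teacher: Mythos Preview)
Your proposal is correct and follows essentially the same approach as the paper: choose $D=B_r\cap X$ with $r=d_H(v^0,u)$, apply the Baillon--Bruck estimate~\eqref{baillonbruck} to $\widehat{S}$ on this set, and translate back via $y^k=\log v^k$. You supply in addition the explicit diameter bound $\operatorname{diam}(B_r\cap X)\leq 2r$ and the identification $\|\widehat{S}(y^k)-y^k\|_H=d_H(f(v^k),v^k)$, which the paper leaves implicit.
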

\begin{proof}
Since $d_H(\alpha x, \beta y)=d_H(x,y)$ for all $\alpha,\beta>0$, and since
$f$ is positively homogeneous, we may assume that $\sum_i v^0_i =1$, and
so $\log v^0\in X=\{x\in \R^N\colon \sum_i x_i=0\}$.
Let us now choose $D:= B_r\cap X$ with $r=d_H(v^0,u)$, so that $v^0\in D$. Then,
it follows from the final part of the proof of \Cref{th-converge} 
that $\hat{S}$ leaves invariant $D$. Then, the inequality~\eqref{e-bound} follows
from~\eqref{baillonbruck}. 
\end{proof}
We also deduce that the projective Krasnoselskii-Mann iteration
provides convergent lower and upper
approximations of the spectral radius
$r(T^d)$.
\begin{corollary}
\label{cor:lambda_control}
Suppose that $f$ is a monotone positively homogeneous map $\R_+^N\to \R_+^N$
having a positive eigenvector $u$. 
Let
$v^0,v^1,\dots$ be the sequence constructed by the projective Krasnoselskii-Mann iteration, and let
\[
\alpha_k := \min_{i\in [N]} \frac{[f(v^k)]_i}{v^k_i},
\qquad 
\beta_k := \max_{i\in [N]} \frac{[f(v^k)]_i}{v^k_i},
 \enspace .
\]
Then,
\begin{align}
\alpha_k \leq r(f) \leq \beta_k 
\label{e-sandwitch}
\end{align}
and
\begin{align}
\log \beta_k -\log \alpha_k 
\leq
\frac{4}{\sqrt{\pi k}} d_H(v^0,u) \enspace .
\label{e-bound2}
\end{align}
\end{corollary}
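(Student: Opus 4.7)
The plan is to derive both parts directly from the nonlinear Collatz-Wielandt formulae of \Cref{thm:cw} and the asymptotic regularity bound~\eqref{e-bound} just established.

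For the sandwich inequality~\eqref{e-sandwitch}, I would first note that the iterates $v^k$ remain in the interior of $\R_+^N$: indeed, $v^0$ is positive by hypothesis, and the proof of \Cref{th-converge} shows that $f$ preserves the interior of $\R_+^N$, so the geometric-mean normalization and the entrywise square root in~\eqref{eq:krasn} yield a positive vector at each step. By the very definitions of $\alpha_k$ and $\beta_k$ we have, entrywise,
\begin{align*}
\alpha_k v^k \leq f(v^k) \leq \beta_k v^k \enspace .
\end{align*}
Since $v^k$ lies in the interior of $\R_+^N$, the first Collatz-Wielandt formula in \Cref{thm:cw} applied to the upper inequality yields $r(f)\leq \beta_k$, while the second formula applied to the lower inequality yields $\alpha_k \leq r(f)$.

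For the quantitative bound~\eqref{e-bound2}, the key observation is that Hopf's oscillation of $\log f(v^k) - \log v^k$ coincides with $\log\beta_k - \log\alpha_k$. Indeed, setting $z_i := \log [f(v^k)]_i - \log v^k_i$, we have $\log\beta_k = \max_i z_i$ and $\log\alpha_k = \min_i z_i$, so by the definition of $\|\cdot\|_H$,
\begin{align*}
\log\beta_k - \log\alpha_k = \max_i z_i - \min_i z_i = \|z\|_H = d_H(f(v^k),v^k) \enspace .
\end{align*}
Inserting the asymptotic regularity estimate~\eqref{e-bound} then delivers~\eqref{e-bound2} immediately.

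Neither step presents a real obstacle; the only subtle point is to make sure the Collatz-Wielandt formulae are applicable, which just requires $v^k$ to be positive (hence in the interior of $\R_+^N$) so that the infimum in the first formula is attained at $v^k$, and $v^k \neq 0$ so that it is an admissible test vector in the second. Both conditions are automatic from the construction of the iteration.
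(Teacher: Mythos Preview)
Your proposal is correct and follows essentially the same route as the paper's proof: deduce the sandwich~\eqref{e-sandwitch} from the entrywise inequalities $\alpha_k v^k \leq f(v^k) \leq \beta_k v^k$ via the Collatz--Wielandt formul\ae\ of \Cref{thm:cw}, and identify $\log\beta_k-\log\alpha_k$ with $d_H(f(v^k),v^k)$ to obtain~\eqref{e-bound2} from~\eqref{e-bound}. Your version is simply more explicit, in particular in checking that the iterates $v^k$ remain in the interior of $\R_+^N$ so that the first Collatz--Wielandt formula applies.
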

\begin{proof}
By definition of Hilbert's projective metric, we have
\[ \log(\beta_k/\alpha_k) = d_H(f(v^k),v^k)\enspace ,
\]
and 
\[\alpha_k v^k\leq f(v^k)\leq \beta_k v^k \enspace .
\]
Then, ~\eqref{e-sandwitch} follows from the Collatz-Wielandt formula (\Cref{thm:cw}), whereas~\eqref{e-bound2} follows from~\eqref{e-bound}. 
\end{proof}

\begin{corollary}
Let $f \coloneqq T^d$ and suppose that $T^d$ has a positive eigenvector $u$. Then the sequence $(\beta_k)_k$ defined in~\Cref{cor:lambda_control} satisfies
\begin{align}
\log \beta_k - \log \rho(\A) \leq \frac{4}{\sqrt{\pi k}} d_H(v^0, u) + \frac{\log n}{d+1} \,.\label{finalbound}
\end{align}
\end{corollary}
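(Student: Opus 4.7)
The plan is to combine the sandwich bound on $r(T^d)$ coming from the projective Krasnoselskii--Mann scheme (the previous corollary) with the overapproximation estimate $r(T^d)\leq n^{1/(d+1)}\rho(\A)$ provided by \Cref{thm:upper}. The main observation is that the logarithmic discrepancy $\log \beta_k -\log \rho(\A)$ splits naturally as a sum of two telescoping errors: one between $\beta_k$ and $r(T^d)$ (the iterative error coming from the fact that $v^k$ is only an approximate eigenvector), the other between $r(T^d)$ and $\rho(\A)$ (the hierarchy error).

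Concretely, I would first invoke \Cref{cor:lambda_control} with $f=T^d$. Since $T^d$ is monotone and positively homogeneous and by assumption has a positive eigenvector, the corollary applies and yields $\alpha_k \leq r(T^d) \leq \beta_k$, together with the quantitative asymptotic regularity estimate
\begin{align*}
\log \beta_k - \log \alpha_k \leq \frac{4}{\sqrt{\pi k}}\, d_H(v^0,u) \enspace .
\end{align*}
Since $\log$ is monotone and $\alpha_k \leq r(T^d)$, this gives in particular
\begin{align*}
\log \beta_k - \log r(T^d) \leq \log \beta_k - \log \alpha_k \leq \frac{4}{\sqrt{\pi k}}\, d_H(v^0,u) \enspace .
\end{align*}

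Second, I would apply \Cref{thm:upper} to control the hierarchy error: taking logarithms in $r(T^d)\leq n^{1/(d+1)}\rho(\A)$ gives
\begin{align*}
\log r(T^d) - \log \rho(\A) \leq \frac{\log n}{d+1} \enspace .
\end{align*}
Finally, writing
\begin{align*}
\log \beta_k - \log \rho(\A) = \bigl(\log \beta_k - \log r(T^d)\bigr) + \bigl(\log r(T^d) - \log \rho(\A)\bigr)
\end{align*}
and bounding each summand by the two inequalities above yields exactly~\eqref{finalbound}. There is no genuine obstacle here: the statement is a direct corollary assembling the iterative bound (which already did the work of controlling Hilbert's oscillation via Baillon--Bruck) with the hierarchy convergence rate (which already did the work of constructing an explicit subeigenvector of $T^d$ from one of $\widehat{T}^d$). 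The only point that warrants a moment's care is checking that the hypotheses of \Cref{cor:lambda_control} are indeed in force, i.e.\ that the positive eigenvector $u$ of $T^d$ needed there is the very same one whose existence is assumed in the statement; this identification is immediate since both hypotheses coincide.
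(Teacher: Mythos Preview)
Your proposal is correct and follows exactly the same approach as the paper, whose proof is the one-line ``We combine the inequalities in \Cref{thm:upper} and \Cref{cor:lambda_control}.'' Your decomposition $\log\beta_k-\log\rho(\A)=(\log\beta_k-\log r(T^d))+(\log r(T^d)-\log\rho(\A))$ and the way you bound each term are precisely what is intended.
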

\begin{proof}
We combine the inequalities in~\Cref{thm:upper} and~\Cref{cor:lambda_control}.
\end{proof}

\begin{remark}
One can give an a priori bound on the vector $u$,
to get an explicit control of $d(v^0,u)$ in~\eqref{finalbound}. See
Lemma~16 of~\cite{akian_et_al:LIPIcs:2017:7026}. 
\end{remark}
\begin{remark}
By~\Cref{prop:pos_irr} and the Perron-Frobenius theorem, the map $T^d$ has an eigenvector with positive entries when the set of matrices given in~\Cref{prop:pos_irr} is positively-irreducible.
We point out that the same iteration also converges under the weaker assumption that $\A$ is positively-irreducible, but it must be initialized with a vector belonging to the interior of a non-trivial face of the cone $\prod_{s \in \p^d} \rnp$ that is invariant by $T^d$ and that has minimal dimension.
\end{remark}

We now show that the projective Krasnoselskii-Mann
iteration converges at a geometric rate under an additional assumption.
Let $u$ denote
a positive eigenvector of $f$, so that $f(u)=\lambda u$ with $\lambda>0$,
and suppose that $f$ is differentiable at point $u$. Since $f$ is order
preserving, the derivative $f'(u)$ can be identified to a nonnegative matrix. By homogeneity of $f$,
we have $f(su)=\lambda su$, and so, differentiating $s\mapsto f(su)$ at
$s=1$, we get $f'(u)u=\lambda u$. Hence, by the Perron-Frobenius
theorem, $\lambda$ is the spectral radius of $f'(u)$.
So we can list the eigenvalues of $f'(u)$
as $\lambda =\mu_1, \mu_2,\dots,\mu_N$, counting multiplicities,
with $|\mu_i|\leq \lambda $ for all $i\in [N]\setminus[1]$.
We set
\begin{align}
\vartheta:= \max_{i\in [N]\setminus[1]}\frac{|1+\mu_i\lambda^{-1}|}{2}
\label{e-def-theta}
\end{align}
As soon as $\lambda$ is a simple eigenvalue of $f'(u)$, we 
have $\mu_i\neq \lambda$ for all $i\in[N]\setminus\{1\}$, 
which, together with $|\mu_i|\leq 1$, entails that
$|1+\mu_i\lambda^{-1}|/2<1$. Then, the assumption
$\vartheta<1$ is satisfied under this simplicity condition.
The next theorem shows that
this entails the geometric convergence with rate $\vartheta$ of the 
projective Krasnoselskii-Mann iteration. 

\begin{theorem}\label{th-new}
Suppose that $f$ is a monotone positively homogeneous map $\R_+^N\to \R_+^N$
having a positive eigenvector $u$, normalized so that $\prod_{i\in [N]}u_i=1$,
suppose that $f$ is differentiable
at point $u$, let $\vartheta$ be defined by~\eqref{e-def-theta}
and suppose finally that $\vartheta<1$. 
Then, 
\begin{align}
 \limsup_{k\to\infty} (d_{T}(v^k,u))^{1/k}
\leq \vartheta \enspace .
\label{e-geom}
\end{align}
\end{theorem}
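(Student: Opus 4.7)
The plan is to pass to logarithmic coordinates, linearize the iteration map at its fixed point, and read off the geometric rate from the spectrum of the derivative. Concretely, with $y^k := \log v^k$ and $y^\star := \log u \in X$ (using $\prod_i u_i = 1$), the scheme~\eqref{eq:krasn} becomes $y^{k+1} = G(y^k)$, where $G(y) := \tfrac{1}{2}(y + \widehat{S}(y))$ and $\widehat{S}$ is the map defined in~\eqref{e-def-Shat}. By \Cref{th-converge}, we already know $y^k \to y^\star$, so the whole question is the asymptotic rate, which will be controlled by the spectral radius of $G'(y^\star)|_X$.

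First I would compute the derivatives. By the chain rule,
\[
S'(y^\star) = \operatorname{diag}(f(u))^{-1} f'(u) \operatorname{diag}(u)
  = \lambda^{-1} D^{-1} f'(u) D,\qquad D := \operatorname{diag}(u).
\]
This matrix is similar to $\lambda^{-1}f'(u)$ and hence has spectrum $\{1,\mu_2/\lambda,\dots,\mu_N/\lambda\}$; the eigenvalue $1$ has eigenvector $e$ (by differentiating the identity $f(su) = \lambda s u$ at $s = 1$). Next, since $\widehat{S} = P\circ S$ for the projection $P = I - N^{-1}ee^T$ onto $X$, we have $\widehat{S}'(y^\star) = P\,S'(y^\star)$. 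A short calculation shows that for each right eigenvector $v_i$ of $S'(y^\star)$ with eigenvalue $\mu_i/\lambda \neq 1$, the vector $w_i := v_i - N^{-1}\langle e, v_i\rangle e \in X$ satisfies $\widehat{S}'(y^\star)w_i = (\mu_i/\lambda) w_i$. Hence the spectrum of the restriction $\widehat{S}'(y^\star)|_X$ is exactly $\{\mu_i/\lambda : 2\le i\le N\}$, and that of $G'(y^\star)|_X = \tfrac{1}{2}(I + \widehat{S}'(y^\star))|_X$ is $\{(1+\mu_i/\lambda)/2 : 2 \le i \le N\}$, whose spectral radius equals $\vartheta$.

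The final step is the standard local linearization argument. Given any $\epsilon>0$, choose a norm $\|\cdot\|_\epsilon$ on $X$ in which the operator norm of $G'(y^\star)|_X$ does not exceed $\vartheta + \epsilon$ (such a norm exists by Gelfand's formula). Since $f$, and therefore $G$, is differentiable at $y^\star$ with fixed point there, we have $G(y) - y^\star = G'(y^\star)(y - y^\star) + o(\|y-y^\star\|_\epsilon)$, so there is a neighborhood $U$ of $y^\star$ in $X$ on which $\|G(y) - y^\star\|_\epsilon \le (\vartheta + 2\epsilon)\|y - y^\star\|_\epsilon$. By~\Cref{th-converge}, $y^k \in U$ for $k$ large enough, and iterating yields $\|y^k - y^\star\|_\epsilon \le C(\vartheta + 2\epsilon)^k$. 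Since $d_T(v^k, u) = \|y^k - y^\star\|_\infty$ and all norms on the finite-dimensional space $X$ are equivalent, we conclude $\limsup_k (d_T(v^k,u))^{1/k} \le \vartheta + 2\epsilon$. Letting $\epsilon \downarrow 0$ gives~\eqref{e-geom}.

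The main obstacle is the spectral identification of $\widehat{S}'(y^\star)|_X$: one must verify that projecting onto $X$ exactly removes the trivial eigenvalue $1$ associated with the Perron direction $e$ without altering the remaining spectrum, which is where the hypothesis that $\lambda$ is a simple eigenvalue (a consequence of $\vartheta<1$) is implicitly used. Everything else is bookkeeping combined with the standard fact that a $C^1$ map with a fixed point whose derivative has spectral radius less than one is a local contraction in some adapted norm.
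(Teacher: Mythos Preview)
Your proof is correct and follows essentially the same route as the paper's: the same logarithmic change of variables $y^k=\log v^k$, the same derivative computation $S'(y^\star)=\lambda^{-1}\operatorname{diag}(u)^{-1}f'(u)\operatorname{diag}(u)$, the same identification of the spectrum of $\widehat{S}'(y^\star)|_X$ as $\{\mu_i/\lambda:2\le i\le N\}$ (the paper outsources this step to~\cite{FGH09} while you sketch the eigenvector calculation), and the same ``standard local linearization plus global convergence from \Cref{th-converge}'' to conclude. One small remark: your eigenvector argument for the spectral identification tacitly assumes diagonalizability, but the conclusion holds in general via the quotient $\mathbb{R}^N/\mathbb{R}e$ (since $Fe=e$, $F$ descends to this quotient, which is isomorphic to $X$ via $P$), and this does not actually require simplicity of $\lambda$---your flagged ``obstacle'' is therefore a little milder than you suggest.
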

\begin{proof}
The proof idea is inspired by the analysis 
of the power algorithm in~\cite{FGH09}. The power
algorithm defines the sequence 
\[
w^{k+1}=\frac{f(w^k)}{G\big[ f(w^k) \big]} \enspace ,
\]
i.e., the difference with the projective Krasnoselskii-Mann iteration
is the damping in~\eqref{eq:krasn}. 
We showed in the the proof of \Cref{th-converge}
that the projective Krasno\-sel\-skii-Mann iteration,
after the change of variable $v^k=\exp(y^k)$, is equivalent 
to the iteration
\[
y^{k+1} = H(y^k) \qquad \text{where } H(z)=\frac{1}{2}(\hat{S}(z)+z) \enspace,
\]
$\hat{S}(z)={S}(z) - N^{-1}\scal{e}{S(z)} e$ and $S(z)=\log\circ f\circ \exp(z)$.
Let $y:=\log u$, and let $\delta(u)$ denote the diagonal
matrix with entries $u_1,\dots,u_N$.  
A simple computation
shows that the matrix $F:=S'(y)$ satisfies
\[
F= \lambda^{-1}\delta(u)^{-1}f'(u)\delta(u) 
\]
so that $M:=\hat{S}'(y)$ is given by 
\[
M= F - N^{-1}ee^\top F \enspace .
\]
From $f'(u)u=\lambda u$, we deduce that $Fe=e$.
It follows that $Me =0$. Moreover, it is shown in the proof of Corollary 5.2 of~\cite{FGH09} that the eigenvalues of $M$ are precisely
$0$, $\lambda^{-1}\mu_2,\dots,\lambda^{-1}\mu_N$. Hence,
the sequence $y^{k}$ satisfies $y^{k+1}=H(y^k)$ where
$H$ is a self-map of the space $X$,
with fixed point $y$, and $H'(y)$ has a spectral radius $\vartheta$.
By a standard argument (end of the proof of Corollary 5.2, {\em ibid.}),
it follows that there is a neighborhood $Y$ of $y$ such that
$\limsup_{k\to\infty} \|y^k-y\|^{1/k}\leq \vartheta$ if $y^0\in Y$.
However, we already showed in \Cref{th-converge} that $y^k$ does converges
to $y$ for every initial condition $y^0\in X$. Hence, we deduce
that $\limsup_{k\to\infty} \|y^k-y\|^{1/k}\leq \vartheta$ for all $y^0\in X$.
Since $v^k=\exp(y^k)$, we deduce that~\eqref{e-geom} holds.
\end{proof}

\begin{remark}\label{rk-shmuel}
\Cref{th-new} is easily applicable in situations in which the map
$f$ is known a priori to be differentiable, for instance when $f$ is a polynomial map
associated to a nonnegative tensor, as in~\cite{FGH09}. 
It is shown in~\cite{FGH09} that the power algorithm
converges with a geometric rate $\vartheta'=\max_{i\in [N]\setminus \{1\}}
|\lambda^{-1}\mu_i|$ as soon as $\vartheta'<1$. The condition
that $\vartheta'<1$ is more restrictive that $\vartheta<1$ as it
excludes the presence of a non-trivial peripheral spectrum of $f'(u)$.
Hence, \Cref{th-new} improves on results of~\cite{FGH09},
by showing that the Krasnoselskii-Mann iteration
does converge geometrically under more general circumstances than the power
algorithm.
\end{remark}
\begin{remark}
When $f$ is not everywhere differentiable, verifying
the assumption of \Cref{th-new} can be difficult.
In particular, the map $f=T^d$, defined as a finite supremum of linear maps, is differentiable
except at the exceptional points $x$ where the supremum
in~\eqref{e-def-Td} is achieved twice. To apply
\Cref{th-new} to the eigenproblem for $T^d$, we need to
know a priori that the eigenvector $u$ is a differentiability point
of $T^d$. For certain classes of maps, including max-plus linear
maps, this property can be shown to hold under some genericity assumptions,
exploiting methods in~\cite{1610.09651}. However, the map $T^d$ has an explicit
structured form which makes it hard to use such genericity arguments. 
\end{remark}
\begin{remark}
When $f$ is not everywhere differentiable, an easier
route to get a geometric convergence rate is to use 
the notion of {\em semidifferential} of $f$, as in~\cite{agn12}.
Thus, we suppose now that $f(u+x)=f(u)+f'_u(x)+o(\|x\|)$, where $f'_u$
is a continuous positively homogeneous map, not necessarily linear,
called the semidifferential of $f$ at point $u$. We refer
the reader to~\cite{agn12} for more background on semidifferentials.
It follows in particular from Theorem~3.8, {\em ibid.}, that the map $f=T^d$ has a semidifferential at every point. The power iteration
for a semidifferentiable monotone positively homogeneous
map is analysed in~\cite[Theorem~7.8]{agn12}. It is shown there
to converge with a geometric rate $\tilde{r}(f'_u)$ where
$\tilde{r}$, defined in the same reference,
is the spectral radius with respect to the
local norm attached to Hilbert's projective metric. 
We leave it to the reader to verify that
a modification of the proof of Theorem~7.8, {\em ibid.},
leads to the conclusion that the sequence generated by the projective
Krasnoselskii-Mann iteration satisfies
\begin{align}
\limsup_{k\to\infty} d_T(v^k,u)^{1/k}\leq \frac{1+\tilde{r}(f'_u)}{2} \enspace .
\label{e-semidiff}
\end{align}
When $f$ is 
differentiable, it can be checked that
$\tilde{r}(f'_u)=\vartheta'=\max_{i\in [N]\setminus \{1\}}|\lambda^{-1}\mu_i|$,
with the same notation than in \Cref{th-new} and in \Cref{rk-shmuel},
and so, in this case, the estimate of the convergence
rate provided by~\eqref{e-semidiff} can be coarser than the one provided by 
\Cref{th-new}.
However, in the nondifferentiable case, the assumption that $\tilde{r}(f'_u)<1$ is often easily verifiable, e.g., by Doeblin-type contraction arguments,
as in the final section of~\cite{agn12}.
\end{remark}

\section{Benchmarks}
\label{sec-benchmarks}

The present method has been implemented in OCaml and  has been run on one core of an $2.2$ GHz Intel Core i7 processor with $8$ Gb of RAM. We report two numerical experiments, showing respectively the convergence of the scheme and the gain in scalability.

\subsection{Convergence of the hierarchy}

We illustrate the convergent nature of the hierarchy on the pair of matrices
\begin{align*}
A = \begin{pmatrix}
0 & 1 & 0 & 0 & 0 \\
1 & 0 & 2 & 0 & 0 \\
0 & 0 & 1 & 0 & 0 \\
0 & 1 & 0 & 0 & 1 \\
0 & 0 & 0 & 2 & 1
\end{pmatrix}
\qquad
B = \begin{pmatrix}
1 & 0 & 2 & 0 & 0 \\
0 & 0 & 0 & 1 & 2 \\
1 & 1 & 0 & 0 & 0 \\
0 & 0 & 0 & 1 & 0 \\
0 & 1 & 0 & 0 & 0
\end{pmatrix} \,.
\end{align*}

By definition of the joint spectral radius, the spectral radius of a product of $N$ matrices in $\A$ is no larger that the $N$-th power of the joint spectral radius $\rho(\A)$. When such a product achieves equality, we say that the set $\A$ has a \emph{spectrum maximizing product}~\cite{gugl_mono} of length $N$.
The pair $\{A,B\}$ has a spectrum maximizing product of length $6$ given by $A^2B^4$ yielding a joint spectral radius equal to $2.0273$.

 We report in~\Cref{tab:bench1} the eigenvalue obtained by solving the hierarchy~\eqref{hier} for $1 \leq d \leq 9$ as well as the computation time.
We observe that the hierarchy is stationary at $d = 7$ and that we recover the exact value of the joint spectral radius. The last column indicates the relative error $\big[\lambda_d - \rho(\A)\big] / \rho(\A)$. Finally, we also observe the exponential cost in computation time at the level $d$ of the hierarchy.

\begin{table}
\centering
\begin{tabular}{cccc}
Level $d$ & CPU Time (s) & Eigenvalue $\lambda_d$ & Relative error \\
\hline
1 & $0.01$ & $2.165$ & $6.8$\% \\
2 & $0.01$ & $2.102$ & $3.7$\% \\
3 & $0.01$ & $2.086$ & $2.9$\% \\
4 & $0.01$ & $2.059$ & $1.6$\% \\
5 & $0.02$ & $2.041$ & $0.7$\% \\
6 & $0.05$ & $2.030$ & $0.1$\% \\
7 & $0.7$ & $2.027$ & $0.0$\% \\
8 & $0.32$ & $2.027$ & $0.0$\% \\
9 & $1.12$ & $2.027$ &$0.0$\% 
\end{tabular}
\caption{Convergence of the hierarchy on $5 \times 5$ matrices}
\label{tab:bench1}
\end{table}

\subsection{Scalability of the approach}

We demonstrate the scalability of our method on  quadruplets of matrices of increasing size, with random entries between $0$ and $0.9$. We show in~\Cref{tab:bench2} the computation time associated with each dimension.  The iteration process converges in less than $50$ iterations in all examples, with a $10^{-6}$ numerical stopping criterion. A monotone extremal hemi-norm has been computed as the supremum of $16$ or $64$ linear forms (respectively for $d = 2$ and $d = 3$). 

\begin{table}
\centering
\begin{tabular}{cccc}
Dimension $n$ & Level $d$  & Eigenvalue $\lambda_d$ & CPU Time \\
\hline
$10$ & $2$ & $4.287$ & $0.01$ s\\
& $3$ & $4.286$ & $0.03$ s \\
\hline
$20$ & $2$ & $8.582$ & $0.01$ s \\
& $3$ & $8.576$ & $0.03$ s\\
\hline
$50$ & $2$ & $22.34$ & $0.04$ s \\
& $3$ & $22.33$ & $0.16$ s\\
\hline
$100$ & $2$ & $44.45$ & $0.17$ s\\
& $3$ & $44.45$ & $0.53$ s\\
\hline
$200$ & $2$ & $89.77$ & $0.71$ s \\
& $3$ & $89.76$ &$2.46$ s \\
\hline
$500$ & $2$ & $224.88$ & $5.45$ s\\
& $3$ & $224.88$ & $19.7$ s\\
\hline
$1000$ & $2$ & $449.87$ & $44.0$ s\\
& $3$ & $449.87$ & $2.7$ min\\
\hline
$2000$ & $2$ & $889.96$ & $4.6$ min \\ 
& $3$ & $889.96$ & $19.2$ min\\
\hline
$5000$ & $2$ & $2249.69$ & $51.9$ min \\
& $3$ & $2249.57$ & $3.3$ h
\end{tabular}
\caption{Computation time for large matrices}
\label{tab:bench2}
\end{table}

\section{Conclusion}

We have proposed a new approach for computing a convergent sequence of upper bounds of the joint spectral radius of nonnegative matrices, by solving a hierarchy of non-linear eigenproblems. 
At any level of this hierarchy, the non-linear eigenvalue $\lambda$ provides an upper bound for the joint spectral radius, whereas the eigenvector encodes a
monotone $\lambda$-extremal norm. 
The non-linear eigenproblem is solved efficiently by a projective version of the Krasnoselskii-Mann iteration. We have implemented this approach and numerical results are witnesses of the scalability of this approach, compared to other works based on the solution of optimization problems.

We finally point out one open problem.
Guglielmi and Protasov showed in~\cite{gugl_mono} that 
when the joint spectral radius is obtained for a unique periodic product, and when this product has a unique dominant eigenvalue, then, there is a polyhedral invariant norm. Each level of the present hierarchy generates
a dictionary of linear forms, whose supremum yields 
a polyhedral extremal norm. This dictionary becomes
richer when the level of the hierarchy is increased.
Hence, we may ask whether the hierarchy is exact, i.e.,
whether there exist a level $d$ such that $r(T^d) = \rho(\A)$,
under the same assumption.

\bibliographystyle{alpha}
\newcommand{\etalchar}[1]{$^{#1}$}

\end{document}